\documentclass[12pt]{amsart} 
\usepackage{amssymb}
\usepackage{fullpage}
\usepackage[colorlinks=true]{hyperref}
\usepackage{tikz-cd}

\newcommand{\bv}{\mathbf{v}}
\newcommand{\bw}{\mathbf{w}}

\newcommand{\bbF}{\mathbb{F}}
\newcommand{\bbK}{\mathbb{K}}

\newcommand{\bbQ}{\mathbb{Q}}
\newcommand{\bbR}{\mathbb{R}}
\newcommand{\bbZ}{\mathbb{Z}}

\newcommand{\cC}{\mathcal{C}}

\font\cyr=wncyr10
\newcommand{\Sha}{\hbox{\cyr X}}

\DeclareMathOperator{\coker}{coker}

\DeclareMathOperator{\Image}{im}

\DeclareMathOperator{\myMod}{mod}
\DeclareMathOperator{\ord}{ord}

\DeclareMathOperator{\rank}{rank}

\numberwithin{equation}{section}

\newtheorem{theorem}{Theorem}[section]

\newtheorem{corollary}[theorem]{Corollary}
\newtheorem{lemma}[theorem]{Lemma}
\newtheorem{proposition}[theorem]{Proposition}

\theoremstyle{definition}
\newtheorem{example}[theorem]{Example}

\newtheorem*{remark-nonum}{Remark}
\newtheorem*{remarks-nonum}{Remarks}

\begin{document}

\title{Exact classification of elliptic curves $y^{2}=x^{3}-pqx$ with rank $0$ and trivial $\Sha[2]$}

\author{Arkabrata Ghosh and Paul M. Voutier}

\date{}

\begin{abstract}
For the elliptic curves $E_{p,q}: y^{2}=x^{3}-pqx$ where $p$ and $q$ are distinct
odd primes, we establish necessary and sufficient conditions under which
$\rank E_{p,q}(\bbQ)$ and $\dim_{\bbF_{2}} \Sha \left( E_{p,q}/\bbQ \right)[2]$ are both
$0$. We do so via a similar characterisation of when the Selmer groups
associated with the degree-$2$ isogeny $\phi$ and its dual $\widehat{\phi}$
are both of minimal size, along with results about a cokernel that arises from a
related exact sequence.
\end{abstract}


\maketitle

\section{Introduction}
\label{sect:intro}

Two of the most important arithmetic objects in the study of elliptic curves are
the Mordell-Weil group and the Tate-Shafarevich group. The Selmer groups associated
with elliptic curves are very useful tools for studying these objects. The work of
Bhargava and Shankar \cite{B-S1, B-S2, B-S3, B-S4}
shows the power of the study of Selmer groups for obtaining striking results
about the ranks of Mordell-Weil groups.

In Proposition~X.6.2(c) of \cite{S09}, Silverman characterises the values of
$\rank E_{p}(\bbQ)+\dim_{2} \Sha \left( E_{p}/\bbQ \right)[2]$ for the elliptic
curves $E_{p}: y^{2}=x^{3}+px$ where $p$ is an odd prime. He obtained this result
via a study of the Selmer groups associated with the isogeny $\phi$ of degree $2$
and its dual $\widehat{\phi}$.

Here, and throughout this paper, $\dim_{2}$ denotes the dimension of an $\bbF_{2}$-vector space,
following Silverman in Section~X.6 of \cite{S09}.

When $p$ is replaced by a product of primes, the corresponding descent analysis
becomes more complicated. First, the possibilities for these Selmer groups
increase, along with the conditions
required for each possibility. In addition, a cokernel that we denote by $\cC_{S}$
in Proposition~\ref{prop:2.1} below is trivial for Silverman's curves, but is not
always trivial for our elliptic curves. As a consequence,
$\rank E(\bbQ)+\dim_{2} \Sha \left( E/\bbQ \right)[2]$ no longer depends only
on the sizes of these Selmer groups. So although several authors since
Silverman have extended his results for certain families of elliptic curves (see,
for instance, \cite{Gh,Yo}, as well as many others), none have obtained results
providing necessary and sufficient conditions for the values.

In this paper, we will consider the elliptic curves $E_{p,q}$ given by
\begin{equation}
\label{eq:1.1}
E_{p,q}: y^{2}= x^{3}-pqx,
\end{equation}
where $p$ and $q$ are distinct odd primes.
We study the $\phi$- and $\widehat{\phi}$-Selmer groups of these curves. These
groups, which we denote by $S^{(\phi)} \left( E_{p,q}/\bbQ \right)$ and
$S^{(\widehat{\phi})} \left( \widehat{E_{p,q}}/\bbQ \right)$,
are defined in Section~\ref{sect:2-iso} below. We determine
precisely when these two Selmer groups are both as small as possible.


We restrict to the case when both these Selmer groups are as small as possible
mainly because in this case, we are able to
determine not just the sum $\rank E_{p,q}(\bbQ)+\dim_{2} \Sha \left( E_{p,q}/\bbQ \right)[2]$
(which is $0$), but also the two summands themselves, both of which are $0$.

Furthermore, by our study of $\cC_{S}$ we are also able to characterise
precisely when both $\rank E_{p,q}(\bbQ)$ and $\dim_{2} \Sha \left( E_{p,q}/\bbQ \right)[2]$
are $0$. To the best of our knowledge, this is the first such result beyond
Silverman's work to provide such a characterisation.

Lastly, our study of $\cC_{S}$ along with an investigation of the other eight
possibilities for the sizes of the two Selmer groups would also permit us to
characterise precisely when
$\rank E_{p,q}(\bbQ)+\dim_{2} \Sha \left( E_{p,q}/\bbQ \right)[2]$
takes on each of its possible values. For those with sufficient stamina and
interest, that would fully generalise Silverman's result for his elliptic curves
$E_{p}$ to our elliptic curves $E_{p,q}$.

\begin{theorem}
\label{thm:rk0}
Suppose $p$ and $q$ are distinct odd primes.

$S^{(\phi)} \left( E_{p,q}/\bbQ \right) = \{1, pq \}$ and
$S^{(\widehat{\phi})} \left( \widehat{E_{p,q}}/\bbQ \right) = \{1, -pq \}$
if and only if one of the following conditions holds.

\noindent
{\rm (i)} $p,q \equiv 1 \pmod{4}$, $pq \equiv 13 \pmod{16}$ and $\left( \dfrac{p}{q} \right)=-1$.

\noindent
{\rm (ii)} $p \not\equiv q \pmod{4}$, $pq \equiv 3 \pmod{8}$ and $\left( \dfrac{p}{q} \right)=-1$.

\noindent
{\rm (iii)} $p \equiv q \equiv 3 \pmod{4}$,
$\left( \dfrac{p}{q} \right)=1$, $q \equiv 3 \pmod{8}$ and $p-q \equiv 8, 12 \pmod{16}$.

\noindent
{\rm (iv)} $p \equiv q \equiv 3 \pmod{4}$, $\left( \dfrac{p}{q} \right)=-1$,
$p \equiv 3 \pmod{8}$ and $q - p \equiv 8, 12 \pmod{16}$.
\end{theorem}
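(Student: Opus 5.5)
We use the standard $2$-isogeny descent (Silverman, Section~X.4 and Proposition~X.4.9). For $E: y^2=x^3+ax+bx$ with $a=0$, $b=-pq$, the isogenous curve is $\widehat{E}: y^2=x^3+4pqx$. The $\phi$-Selmer group $S^{(\phi)}(E_{p,q}/\bbQ)$ consists of the classes $d\in\bbQ^*/\bbQ^{*2}$ with $d\mid b$ such that
\[
C_d: dw^2 = d^2 - pq\,z^4
\]
is everywhere locally solvable. Here $d$ ranges over divisors of $-pq$ modulo squares, so $d\in\{\pm1,\pm p,\pm q,\pm pq\}$. Similarly, $S^{(\widehat{\phi})}(\widehat{E_{p,q}}/\bbQ)$ consists of the classes $d$ with $d\mid 4pq$ for which
\[
\widehat{C}_d: dw^2 = d^2 + 4pq\,z^4
\]
is everywhere locally solvable. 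Here $d$ ranges over $\{\pm1,\pm2,\pm p,\pm 2p,\dots,\pm 2pq\}$, sixteen classes in all. The images of the $2$-torsion points always lie in the Selmer groups: $b=-pq$ gives the class $-pq$ in one group, and $4pq\equiv pq$ gives the class $pq$ in the other. Together with $1$, these form the minimal subgroups $\{1,pq\}$ and $\{1,-pq\}$; the identification of which group receives which class will follow the paper's conventions in Section~\ref{sect:2-iso}. Since both sets are groups, minimality is equivalent to showing that a small set of generators fails local solvability. For the $\phi$-side it suffices to exclude $-1$ and $p$ (and hence also $q$, $-p$, and so on, by the group structure). For the $\widehat{\phi}$-side we must exclude enough classes among $-1,2,-2,p,2p,\dots$ so that nothing beyond $\{1,-pq\}$ survives.

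The key step is to tabulate, for each $d$, the local conditions at $\infty$, at $p$, at $q$, and at $2$; at other primes solvability is automatic by Hensel's lemma. The real condition is easy. On the $\phi$-side, $d=-1$ fails precisely when $-pq<0$ forces a sign obstruction, so $d<0$ is handled at $\infty$ except where $pq$ enters. On the $\widehat{\phi}$-side every $d<0$ is killed at $\infty$, since $d^2+4pqz^4>0$.

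At an odd prime $\ell\in\{p,q\}$, the standard valuation argument gives the following. If $\ell\nmid d$, solvability is equivalent to $d$ being a square mod $\ell$, or to solutions with $\ell\mid z$. If $\ell\mid d$, one needs $(d/\ell)\cdot(\text{cofactor})$ to be a square mod $\ell$, or a fourth-power condition. This yields Legendre symbol conditions such as $\left(\frac{-1}{q}\right)$, $\left(\frac{2}{q}\right)$, $\left(\frac{p}{q}\right)$ and $\left(\frac{q}{p}\right)$. These are combined using quadratic reciprocity, which is why the conditions split according to $p,q \bmod 4$.

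At $2$ we analyse $dw^2=d^2-pqz^4$ and $dw^2=d^2+4pqz^4$ over $\bbQ_2$ by considering the valuations of $z$ and $w$. Solvability then reduces to congruences of $d$ and $pq$ modulo $8$ or $16$. The condition modulo $16$ in (i), and the conditions $p-q\equiv 8,12 \pmod{16}$, come from the case $v_2(z)=0$ or $v_2(z)=1$ in the $\widehat{\phi}$-equation, where $d^2+4pqz^4$ must be $d$ times a $2$-adic square.

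Finally, we run through the four congruence classes of $(p \bmod 4,\ q \bmod 4)$, with $p\not\equiv q$ treated symmetrically. In each class we determine exactly when every nontrivial candidate is excluded by at least one local obstruction, and show that this is equivalent to the stated condition. For the converse, whenever one of (i)--(iv) fails we exhibit an explicit extra class $d$ that is locally solvable everywhere. For instance, if $\left(\frac{p}{q}\right)=1$ with $p\equiv q\equiv 1 \pmod 4$, then $d=p$ survives the odd primes, and one checks that it survives at $2$.

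The main obstacle is the $2$-adic analysis of the $\widehat{\phi}$-covers for $d\in\{2,2p,2q,p,q\}$. There the answer depends on $pq \bmod 16$ and on the asymmetric conditions on $p-q$, which requires the most careful casework. We would organise this analysis by first substituting $w=2^k w'$ and $z=2^j z'$, and then writing out the finitely many residue conditions modulo $16$ or $32$.
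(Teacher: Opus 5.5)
Your reduction on the side $\{\pm1,\pm p,\pm q,\pm pq\}$ is false, and it discards exactly the class responsible for condition~(iv). This group, the one attached to $dw^{2}=d^{2}-pqz^{4}$, always contains $-pq$. Modulo $\{1,-pq\}$ it has three nontrivial cosets, represented by $-1$, $p$ and $-p$. Excluding $-1$ and $p$ does not exclude their product, since $\{1,-pq,-p,q\}$ is a subgroup containing neither. The paper therefore treats $\widehat{C}_{-1}$, $\widehat{C}_{p}$ and $\widehat{C}_{-p}$ separately. When $p\equiv q\equiv 3 \pmod{4}$ and $\left( \dfrac{p}{q} \right)=-1$, the first two fail automatically, and (iv) is precisely the condition that $\widehat{C}_{-p}$ fails (Lemma~\ref{lem:oT4}).

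For a concrete failure, take $(p,q)=(47,7)$. Your check passes on this side. The other side is genuinely minimal: $C_{p}$ fails since $p\equiv q\equiv 3 \pmod 4$, $C_{2}$ fails since $pq\equiv 9 \pmod{16}$, and $C_{2p}$ fails since $\left( \dfrac{2p}{q} \right)=-1$. So you would declare both groups minimal. But $\left( \dfrac{q}{p} \right)=\left( \dfrac{-p}{q} \right)=1$ and $p\equiv 7 \pmod{8}$, so $-47$ lies in $S^{(\widehat{\phi})}\left( \widehat{E_{p,q}}/\bbQ \right)$. At $2$, for example, $(u,z)=(1,2)$ in $\widehat{C}_{-p}$ gives $w^{2}=65\equiv 1 \pmod 8$. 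This is consistent with (iv) failing.

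Your setup is also reversed relative to the paper and Silverman's Proposition~X.4.9. The equation $dw^{2}=d^{2}-pqz^{4}$ with $d\mid pq$ describes $S^{(\widehat{\phi})}\left( \widehat{E_{p,q}}/\bbQ \right)$, which contains $-pq$. By contrast, $S^{(\phi)}\left( E_{p,q}/\bbQ \right)$ contains $pq$ and uses $dw^{2}=d^{2}+4pqz^{4}$ over sixteen classes, with all negative classes dying at $\infty$; so your target $\{1,-pq\}$ for that side is impossible. The swap leads to concrete misstatements:
\begin{itemize}
\item On the $d^{2}-pqz^{4}$ side, the class $-1$ has real points ($w^{2}=pqz^{4}-1$). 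It is killed instead by $\left( \dfrac{-1}{\ell} \right)$ at $\ell=p,q$ and by $pq\not\equiv 1,5,9 \pmod{16}$ at $2$, which is where $pq\equiv 13 \pmod{16}$ in (i) comes from.
\item The conditions $p-q\equiv 8,12 \pmod{16}$ come from $\widehat{C}_{\pm p}$ in that same family, not from $d^{2}+4pqz^{4}$.
\end{itemize}

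More fundamentally, the proposal never produces the exact local criteria or the step combining them; everything substantive is deferred to ``one checks''. The paper obtains the criteria for all ten spaces $C_{2},C_{p},C_{2p},\widehat{C}_{-1},\widehat{C}_{\pm 2},\widehat{C}_{\pm 2p},\widehat{C}_{\pm p}$ uniformly from Cohen's Proposition~6.5.1 (Lemma~\ref{lem:Cohen}). It then combines them case by case modulo $4$, $8$ and $16$. That combination, carried out over every nontrivial coset, is the proof.
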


As mentioned above, we are able to use this theorem to characterise precisely
when the non-torsion part of the Mordell-Weil group and the $2$-primary subgroup
of the Tate-Shafarevich group are both trivial for our curves, $E_{p,q}$.

\begin{theorem}
\label{thm:rk0-sha1}
The rank of $E_{p,q}(\bbQ)$ is $0$ and $\dim_{2} \Sha \left( E_{p,q}/\bbQ \right)[2]=0$
if and only if one of the conditions in Theorem~$\ref{thm:rk0}$ holds.
\end{theorem}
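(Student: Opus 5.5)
We reduce Theorem~\ref{thm:rk0-sha1} to Theorem~\ref{thm:rk0} together with an analysis of the cokernel $\cC_{S}$ from Proposition~\ref{prop:2.1}. The starting point is the standard descent relation (Silverman, Section~X.6) combined with the exact sequence behind Proposition~\ref{prop:2.1}. Write $S^{(\phi)}=S^{(\phi)}(E_{p,q}/\bbQ)$ and $S^{(\widehat{\phi})}=S^{(\widehat{\phi})}(\widehat{E_{p,q}}/\bbQ)$. Since $E_{p,q}(\bbQ)[2]=\{O,(0,0)\}$ (as $pq$ is not a square), this relation should take the form
\[
\rank E_{p,q}(\bbQ)+\dim_{2}\Sha(E_{p,q}/\bbQ)[2] = \dim_{2} S^{(\phi)} + \dim_{2} S^{(\widehat{\phi})} - 2 - \dim_{2}\cC_{S},
\]
where I take the precise form and sign conventions from Proposition~\ref{prop:2.1}. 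The images of the $2$-torsion points always give $\{1,pq\}\subseteq S^{(\phi)}$ and $\{1,-pq\}\subseteq S^{(\widehat{\phi})}$, so both Selmer groups have dimension at least $1$.

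\emph{Sufficiency.} Assume one of (i)--(iv) holds. By Theorem~\ref{thm:rk0}, both Selmer groups then have dimension exactly $1$, so the right-hand side is $-\dim_2 \cC_S \le 0$. Since the left-hand side is a sum of nonnegative quantities, it is $0$ and $\cC_S$ is trivial. Hence both $\rank E_{p,q}(\bbQ)=0$ and $\dim_2\Sha[2]=0$. Alternatively, the usual bound $\rank \le \dim S^{(\phi)}+\dim S^{(\widehat{\phi})}-2$ gives rank $0$ directly, and then $\Sha[\phi]$ and $\Sha[\widehat\phi]$ are trivial, which forces $\Sha[2]$ to be trivial via the exact sequence $0\to \Sha[\phi]\to\Sha[2]\to\Sha[\widehat\phi]$ up to the cokernel term.

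\emph{Necessity.} Assume the rank is $0$ and $\Sha[2]=0$. Then $\dim_2 \cC_S = \dim_2 S^{(\phi)}+\dim_2 S^{(\widehat{\phi})}-2$, and we must show this forces both Selmer groups to be minimal. The key input is an upper bound on $\cC_S$. I expect $\cC_S$ to be a quotient of $\Sha(\widehat{E_{p,q}}/\bbQ)[\widehat{\phi}]$ (or of $\Sha[\phi]$) by the image of $\Sha[2]$. Rank $0$ means the Mordell--Weil images in the Selmer groups are exactly the torsion images $\{1,pq\}$ and $\{1,-pq\}$. Hence $\Sha[\phi]\cong S^{(\phi)}/\{1,pq\}$ and $\Sha[\widehat\phi]\cong S^{(\widehat\phi)}/\{1,-pq\}$. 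The exact sequence
\[
0\to E'(\bbQ)[\widehat\phi]/\phi(E(\bbQ)[2])\to \Sha(E)[\phi]\to\Sha(E)[2]\to\Sha(E')[\widehat\phi]\to \cC_S\to 0
\]
together with $\Sha(E)[2]=0$ then gives $\Sha(E)[\phi]$ of dimension at most $1$ (absorbing the torsion term) and $\cC_S\cong\Sha(E')[\widehat\phi]$. At this point the Cassels--Tate pairing argument enters: $\Sha[\phi]$ and $\Sha[\widehat\phi]$ have squares or parity constraints, and the alternating pairing on $\Sha(E')[\widehat\phi]$ modulo the image of $\Sha(E)[2]$ shows that $\cC_S$ has even dimension and is controlled. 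I would use this to force $\Sha(E')[\widehat\phi]=0$ and then show $\Sha(E)[\phi]=0$ as well, by checking that $(0,0)\in E'$ contributes nothing beyond $\{1,pq\}$.

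The main obstacle is this cokernel step: proving that, under rank $0$ and $\Sha[2]=0$, $\cC_S$ cannot absorb a nontrivial Selmer excess. The first thing I would try is an explicit argument specific to $E_{p,q}$ rather than general theory. Using the paper's determination of the possible Selmer groups, which are generated by divisors of $\pm pq$ and $\pm 2$, I would check case by case that whenever $S^{(\phi)}$ or $S^{(\widehat\phi)}$ is larger than minimal, some extra Selmer element lifts to a nonzero element of $\Sha[2]$ or forces positive rank. That is, I would show $\cC_S$ has dimension strictly less than $\dim_2 S^{(\phi)}+\dim_2 S^{(\widehat\phi)}-2$ in every non-minimal case, so this case fails the hypothesis. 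The necessity direction would then follow from Theorem~\ref{thm:rk0}.
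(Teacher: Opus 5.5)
Your sufficiency direction is exactly the paper's argument (Theorem~\ref{thm:rk0}, Proposition~\ref{prop:2.1} and $\dim_{2}\cC_{S}\geq 0$). The necessity direction, however, has a genuine gap. You correctly locate the crux: $\cC_{S}$ must not be able to absorb a non-minimal Selmer excess. But you only announce a case-by-case check, and evenness of $\dim_{2}\cC_{S}$ (Lemma~\ref{lem:Cphi-1}) does not by itself ``force $\Sha(\widehat{E_{p,q}}/\bbQ)[\widehat{\phi}]=0$''. Three ingredients are missing.

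\emph{First, a torsion bound.} Since $\widehat{E_{p,q}}(\bbQ)[2]\cong\bbZ/2\bbZ$, the point $(0,0)\in E_{p,q}(\bbQ)$ is not in $\widehat{\phi}(\widehat{E_{p,q}}(\bbQ))$. Its nontrivial class $-pq$ lies in $\Image(\phi_{S})$, because $E(\bbQ)/2E(\bbQ)\rightarrow E(\bbQ)/\widehat{\phi}(\widehat{E}(\bbQ))$ is onto. Hence $\dim_{2}\cC_{S}\leq\dim_{2}S^{(\widehat{\phi})}(\widehat{E_{p,q}}/\bbQ)-1$. With evenness, $\cC_{S}=0$ whenever $\dim_{2}S^{(\widehat{\phi})}(\widehat{E_{p,q}}/\bbQ)\leq 2$ (Lemma~\ref{lem:Cphi-2}), so in that range Proposition~\ref{prop:2.1} forces both Selmer groups to be minimal.

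\emph{Second, Lemma~\ref{lem:oT2}.} It removes every class divisible by $2$, so $S^{(\widehat{\phi})}(\widehat{E_{p,q}}/\bbQ)\subseteq\{\pm1,\pm p,\pm q,\pm pq\}$, and the only remaining case is dimension $3$.

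\emph{Third, an arithmetic argument in that case.} If $-1,p\in S^{(\widehat{\phi})}(\widehat{E_{p,q}}/\bbQ)$, then Lemmas~\ref{lem:oT1} and \ref{lem:oT3} give $p\equiv q\equiv 1\pmod{4}$ and $\left(\frac{p}{q}\right)=\left(\frac{q}{p}\right)=1$. So $p\in S^{(\phi)}(E_{p,q}/\bbQ)$ by Lemma~\ref{lem:T2}, and Proposition~\ref{prop:2.1} gives $\rank E_{p,q}(\bbQ)+\dim_{2}\Sha(E_{p,q}/\bbQ)[2]\geq 2+3-2-2=1$.

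Your exact sequence is also misstated. The term $\widehat{E}(\bbQ)[\widehat{\phi}]/\phi(E(\bbQ)[2])$ belongs to the Selmer-level sequence $0\rightarrow\widehat{E}(\bbQ)[\widehat{\phi}]/\phi(E(\bbQ)[2])\rightarrow S^{(\phi)}(E/\bbQ)\rightarrow S^{(2)}(E/\bbQ)\rightarrow S^{(\widehat{\phi})}(\widehat{E}/\bbQ)$. By contrast, $\Sha(E/\bbQ)[\phi]$ is literally a subgroup of $\Sha(E/\bbQ)[2]$, since $\widehat{\phi}\circ\phi=[2]$. So your conclusion that $\Sha(E/\bbQ)[\phi]$ has dimension at most $1$ should be that it is $0$.

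Correcting this streamlines your own route. With rank $0$ and $\Sha(E/\bbQ)[2]=0$ you get $S^{(\phi)}(E_{p,q}/\bbQ)=\{1,pq\}$ at once, and $\cC_{S}\cong\Sha(\widehat{E}/\bbQ)[\widehat{\phi}]\cong S^{(\widehat{\phi})}(\widehat{E_{p,q}}/\bbQ)/\{1,-pq\}$. Evenness then leaves dimension $1$ or $3$ for the $\widehat{\phi}$-Selmer group, and $3$ is excluded by the third ingredient above because it would put $p$ into $S^{(\phi)}(E_{p,q}/\bbQ)$. Either way, the dimension-$3$ case needs this specific local-solvability input, which your proposal never supplies.
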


Since $\dim_{2} \Sha \left( E_{p,q}/\bbQ \right)[2]=0$ if and only if
$\Sha \left( E_{p,q}/\bbQ \right)\left[ 2^{\infty} \right]$ is trivial, the theorem
also holds with this latter condition, justifying the statement before
Theorem~\ref{thm:rk0-sha1}.

\begin{example}
For $p \equiv 7 \pmod{8}$ and $q \equiv 5 \pmod{8}$ with $\left( \dfrac{p}{q} \right)=-1$,
we have $pq \equiv 3 \pmod{8}$, so condition~(ii) here is satisfied. In this way,
we obtain Theorem~1.1 in \cite{Gh} as a special case of our result. In fact, our
results also strengthen Theorem~1.1 there, as we show that the $2$-primary subgroup
of the Tate-Shafarevich group is always trivial for such curves.
\end{example}

\begin{example}
It is very easy to find examples where the sizes of the $\phi$- or $\widehat{\phi}$-Selmer
groups are larger, but the rank of the Mordell-Weil group is
$0$. For example, with $(p,q)=(7,29)$, $(13,23)$, $(17,19)$, one of $p$ or $q$ is congruent
to $1 \pmod{4}$ and $\left( \dfrac{p}{q} \right)=\left( \dfrac{q}{p} \right)=1$,
so the homogeneous space $C_{p}$ is
everywhere locally solvable by Lemma~\ref{lem:T2}.
Hence the dimension of the $2$-Selmer group is positive.
However, using the \verb!ellrank()! function in PARI/GP, one finds that the
rank of the Mordell-Weil group, $E_{p,q}(\bbQ)$, is $0$ for these cases. The
reader may be interested to know that, subject to BSD, we found that $|\Sha|=4$
for each of these three examples, again using PARI/GP.
\end{example}

We are also able to obtain the following result for the rank over $\bbQ(i)$.

\begin{corollary}
\label{cor:rk0-Qi}
If any of the conditions in Theorem~$\ref{thm:rk0}$ holds, then the rank of
$E_{p,q}(\bbQ(i))$ is $0$.
\end{corollary}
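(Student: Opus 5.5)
We would deduce the corollary from Theorem~\ref{thm:rk0-sha1} together with the standard decomposition of the Mordell--Weil group over a quadratic extension. For $K=\bbQ(i)$ with non-trivial automorphism $\sigma$, the map $P \mapsto (P+P^{\sigma},\,P-P^{\sigma})$ shows that
\[
\rank E_{p,q}(\bbQ(i)) = \rank E_{p,q}(\bbQ) + \rank E_{p,q}^{(-1)}(\bbQ),
\]
where $E_{p,q}^{(-1)}$ is the quadratic twist by $-1$. The kernel and cokernel of $E(K)\to E(\bbQ)\times E^{(-1)}(\bbQ)$ are killed by $2$, so the ranks agree. Hence it suffices to show that both ranks on the right vanish under any of the conditions (i)--(iv).

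The first rank is $0$ by Theorem~\ref{thm:rk0-sha1}, since any of the conditions of Theorem~\ref{thm:rk0} holds. For the twist, $E_{p,q}^{(-1)}: -y^{2}=x^{3}-pqx$. Substituting $x\mapsto -x$ gives $y^{2}=x^{3}-pqx$ again, so $E_{p,q}^{(-1)}\cong E_{p,q}$ over $\bbQ$. This is the usual fact that $y^2=x^3+ax$ has complex multiplication by $i$, with $[i](x,y)=(-x,iy)$ defined over $\bbQ(i)$. Explicitly, the $\bbQ(i)$-isomorphism $(x,y)\mapsto(-x,iy)$ from $E_{p,q}$ to its $-1$ twist descends. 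Consequently
\[
\rank E_{p,q}(\bbQ(i)) = 2\,\rank E_{p,q}(\bbQ) = 0.
\]

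An alternative argument uses the automorphism $[i]$ directly. $E(\bbQ(i))\otimes\bbQ$ is a $\bbQ(i)$-vector space whose $\sigma$-invariant part is $E(\bbQ)\otimes\bbQ$. Since $\sigma$ anticommutes with $[i]$, the map $[i]$ sends the $+1$ eigenspace of $\sigma$ isomorphically onto the $-1$ eigenspace, and the conclusion is the same.

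The only point needing care is the twist identification. It can be checked directly: a point $(x,y)\in E(\bbQ(i))$ with $\sigma(x,y)=-(x,y)$ has $x\in\bbQ$ and $y\in i\bbQ$, and $(-x,-iy)$ is then a rational point of $E_{p,q}$. This gives an explicit group isomorphism between the $-1$ part and $E(\bbQ)$. No real obstacle arises: all the substantive work lies in Theorems~\ref{thm:rk0} and~\ref{thm:rk0-sha1}.
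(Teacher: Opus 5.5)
Your proposal is correct and takes essentially the paper's approach: the paper cites the decomposition $\rank E(\bbK)=\rank E(\bbQ)+\rank E^{(m)}(\bbQ)$ (Silverman, Exercise~10.22) with $m=-1$, notes that the $-1$ twist of $y^{2}=x^{3}+Dx$ is the same curve so that $\rank E_{D}(\bbQ(i))=2\rank E_{D}(\bbQ)$, and then invokes Theorem~\ref{thm:rk0-sha1}, whereas you additionally prove the decomposition yourself and give a second argument via $[i]$. One small slip: in your model $-y^{2}=x^{3}-pqx$ the $\bbQ$-isomorphism from $E_{p,q}$ is $(x,y)\mapsto(-x,y)$, while $(x,y)\mapsto(-x,iy)$ is the automorphism $[i]$ of $E_{p,q}$ itself; your substitution $x\mapsto -x$ and your explicit check of the $-1$ eigenspace are nevertheless correct.
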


\section{Descent via $2$-isogeny}
\label{sect:2-iso}

Our proof will use the method
of descent via $2$-isogeny. We will describe it briefly here. Our description is
based on Proposition~X.4.9 in \cite{S09}. For more details and information, as
well as proofs, see Section~X.4 of \cite{S09}. Suppose that
$E: Y^{2}= X^{3} + aX^{2} + bX$ is an elliptic curve over $\bbQ$ with $a,b \in \bbZ$
and $\widehat{E}: Y^{2} = X^{3} - 2aX^{2} + \left( a^{2}- 4b \right)X$
is the corresponding isogenous curve to $E$. Hence, there exists an isogeny
$\phi: E \rightarrow \widehat{E}$ of degree $2$ given by
\[
\phi(x,y) = \left( \frac{y^{2}}{x^{2}}, \frac{y(x^{2}-b)}{x^{2}} \right)
\]
for $x \neq 0$, with $\phi(0,0)=\phi(O)=O$.

Let $M_{\bbQ}$ be the set of all inequivalent normalised absolute values on $\bbQ$.
It consists of the ordinary absolute value, along with one non-archimedean absolute
value for each prime number $\ell$. As in \cite{S09}, we let $M_{\bbQ}^{0}$ be
the set of these non-archimedean absolute values. We also let
\[
S= \left\{ \infty \right\} \cup \left\{ v \in M_{\bbQ}^{0}: \ord_{v} \left( 2b \left( a^{2}-4b \right) \right)>0
\right\}.
\]

As in Theorem~X.1.1(c) of \cite{S09} with $m=2$ and $K=\bbQ$ there, we put
\[
\bbQ(S,2) = \left\{ d \in \bbQ^{*}/ \left( \bbQ^{*} \right)^{2}: \ord_{v}(d) \equiv 0 \, (\myMod 2)
\text{ for all $v \not\in S$} \right\}.
\]

For each $d \in \bbQ^{*}$, we let $C_{d}/\bbQ$ be the homogeneous space for $E/\bbQ$
given by the equation
\[
C_{d}: dw_{1}^{2}=d^{2}-2adz^{2}+ \left( a^{2}-4b \right) z^{4}.
\]

In fact, we will work primarily with an equation obtained by homogenising this equation
and then replacing $uw_{1}$ on the left-hand side by $w$:
\[
dw^{2}=d^{2}u^{4}-2ad(uz)^{2}+ \left( a^{2}-4b \right) z^{4}.
\]

Dividing this equation by $d$, we obtain
\[
w^{2}=du^{4}-2a(uz)^{2}+ \frac{a^{2}-4b}{d} z^{4}.
\]

The $\phi$-Selmer group is
\[
S^{(\phi)}(E/\bbQ) = \left\{ d \in \bbQ(S,2): C_{d} \left( \bbQ_{v} \right) \neq \emptyset
\text{ for all $v \in S$} \right\}.
\]

Here $C_{d} \left( \bbQ_{v} \right) \neq \emptyset$ is equivalent to the equation
$w^{2}=du^{4}-2a(uz)^{2}+ \frac{a^{2}-4b}{d} z^{4}$ having a solution in $\bbQ_{v}$
in addition to the trivial solution. That is, it has a solution in $\bbQ_{v}$
other than $(u,w,z)=(0,0,0)$.

As Silverman states in Remark~X.4.9.1 in \cite{S09}, since $\widehat{E}$ is of
the same form as $E$, everything above also applies to $\widehat{E}$ with the
dual isogeny $\widehat{\phi}: \widehat{E} \rightarrow E$. Here
\[
\widehat{S}
=\left\{ \infty \right\} \cup \left\{ v \in M_{\bbQ}^{0}: \ord_{v} \left( 32b \left( a^{2}-4b \right) \right)>0 \right\}=S.
\]

Thus,
\[
\bbQ \left( \widehat{S},2 \right)
= \left\{ d \in \bbQ^{*}/ \left( \bbQ^{*} \right)^{2}: \ord_{v}(d) \equiv 0 \, (\myMod 2)
\text{ for all $v \not\in \widehat{S}$} \right\}.
\]

For each $d \in \bbQ^{*}$, we let $\widehat{C}_{d}/\bbQ$ be the homogeneous
space for $\widehat{E}/\bbQ$ given by the equation
\[
\widehat{C}_{d}: dw_{1}^{2}=d^{2}+4adz^{2} + 16bz^{4}
\]
and the $\widehat{\phi}$-Selmer group is
\[
S^{(\widehat{\phi})} \left( \widehat{E}/\bbQ \right)
= \left\{ d \in \bbQ \left( \widehat{S},2 \right):
\widehat{C}_{d} \left( \bbQ_{v} \right) \neq \emptyset
\text{ for all $v \in \widehat{S}$} \right\}.
\]

As with $C_{d}$, we will use a version of the equation for $\widehat{C}_{d}$
obtained via homogenisation in what follows:
\[
dw^{2}=d^{2}u^{4}+4ad(uz)^{2}+16bz^{4},
\]
or
\[
w^{2}=du^{4}+4a(uz)^{2}+\frac{16b}{d}z^{4}.
\]

The following result is crucial for obtaining information about both the Mordell-Weil
rank and the $2$-torsion subgroup of the Tate-Shafarevich group. It follows from
results in Chapter~X of \cite{S09}, but we have not been able to find a statement
in a similar form in the literature.

The isogeny $\phi: E \rightarrow \widehat{E}$ induces a map $\phi_{S}:
S^{(2)}(E/\bbQ) \rightarrow S^{(\widehat{\phi})} \left( \widehat{E}/\bbQ \right)$.
We let $\cC_{S}=\coker \left( \phi_{S} \right)$.

\begin{proposition}
\label{prop:2.1}
Let $E$, $\widehat{E}$, $\phi$ and $\widehat{\phi}$ be as defined earlier in this
section. Then,
\[
\rank E(\bbQ)+\dim_{2} \Sha (E/\bbQ)[2]
= \dim_{2} S^{(\phi)}(E/\bbQ)+\dim_{2} S^{(\widehat{\phi})}(\widehat{E}/\bbQ)-2
- \dim_{2} \cC_{S}.
\]
\end{proposition}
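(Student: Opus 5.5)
The plan is to derive the formula from two exact sequences: the standard sequence for the $2$-Selmer group, and a four-term sequence linking $S^{(\phi)}$, $S^{(2)}$ and $S^{(\widehat{\phi})}$ whose last map is $\phi_{S}$. After that, the proof is a dimension count.

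\emph{Step 1 (the $2$-Selmer sequence).} By Theorem~X.4.2 of \cite{S09} with $m=2$ there is an exact sequence
\[
0 \to E(\bbQ)/2E(\bbQ) \to S^{(2)}(E/\bbQ) \to \Sha(E/\bbQ)[2] \to 0 .
\]
Since $E(\bbQ)$ is finitely generated, $\dim_{2} E(\bbQ)/2E(\bbQ) = \rank E(\bbQ) + \dim_{2} E(\bbQ)[2]$. Hence
\[
\dim_{2} S^{(2)}(E/\bbQ) = \rank E(\bbQ) + \dim_{2} E(\bbQ)[2] + \dim_{2} \Sha(E/\bbQ)[2].
\]

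\emph{Step 2 (the four-term sequence).} From $\widehat{\phi}\circ\phi=[2]$ we get the exact sequence of Galois modules $0\to E[\phi]\to E[2]\xrightarrow{\phi}\widehat{E}[\widehat{\phi}]\to 0$. Its long exact cohomology sequence contains
\[
\widehat{E}(\bbQ)[\widehat{\phi}] \xrightarrow{\delta} H^{1}(\bbQ,E[\phi]) \to H^{1}(\bbQ,E[2]) \xrightarrow{\phi_*} H^{1}(\bbQ,\widehat{E}[\widehat{\phi}]).
\]
The kernel of $\delta$ is $\phi(E(\bbQ)[2])$. Moreover, $\delta$ agrees with the Kummer coboundary for $\phi$ applied to the rational points of $\widehat{E}[\widehat{\phi}]$, so $\delta(\widehat{E}(\bbQ)[\widehat{\phi}])\subseteq S^{(\phi)}(E/\bbQ)$. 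Restricting to Selmer groups, I will check that
\[
0 \to \widehat{E}(\bbQ)[\widehat{\phi}]/\phi\left(E(\bbQ)[2]\right) \to S^{(\phi)}(E/\bbQ) \to S^{(2)}(E/\bbQ) \xrightarrow{\phi_{S}} S^{(\widehat{\phi})}(\widehat{E}/\bbQ)
\]
is exact.

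The maps land in the right Selmer groups because $H^{1}(\bbQ_v,E[\phi])\to H^{1}(\bbQ_v,E)$ factors through $H^{1}(\bbQ_v,E[2])$, and similarly for the map into $\widehat{E}$ after applying $\phi$. Exactness at $S^{(2)}$ works as follows. A class $\eta\in S^{(2)}$ with $\phi_*\eta=0$ lifts to some $\xi\in H^{1}(\bbQ,E[\phi])$. The local image of $\xi$ in $H^{1}(\bbQ_v,E)$ equals that of $\eta$, and this vanishes for every $v$. So $\xi\in S^{(\phi)}(E/\bbQ)$.

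I expect this step to be the main obstacle. The difficulty is bookkeeping rather than depth: one must make sure the Selmer-level sequence really is exact at each spot, using only the statements in Section~X.4 of \cite{S09}.

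\emph{Step 3 (counting).} With $\cC_S=\coker\phi_S$, the four-term sequence gives
\[
\dim_{2} S^{(2)} = \dim_{2} S^{(\phi)} - \dim_{2}\left(\widehat{E}(\bbQ)[\widehat{\phi}]/\phi(E(\bbQ)[2])\right) + \dim_{2} S^{(\widehat{\phi})} - \dim_{2}\cC_{S}.
\]
Now $\widehat{E}(\bbQ)[\widehat{\phi}]=\{O,(0,0)\}$ has dimension $1$. The map $\phi\colon E[2]\to\widehat{E}[\widehat{\phi}]$ is surjective with kernel $\{O,(0,0)\}$. Hence if $E(\bbQ)[2]=\{O,(0,0)\}$, then $\phi(E(\bbQ)[2])$ is trivial and the quotient has dimension $1$. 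If instead $E[2]\subseteq E(\bbQ)$, the quotient is trivial. In both cases
\[
\dim_{2}E(\bbQ)[2]+\dim_{2}\left(\widehat{E}(\bbQ)[\widehat{\phi}]/\phi(E(\bbQ)[2])\right)=2.
\]
Combining this with Step~1 gives the stated identity.
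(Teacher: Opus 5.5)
Your proof is correct, but it takes a different route from the paper.

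\textbf{Your route.} You work entirely at the level of Selmer groups. The Galois-cohomology sequence of $0\to E[\phi]\to E[2]\to \widehat{E}[\widehat{\phi}]\to 0$ gives the exact sequence
\[
0\to \widehat{E}(\bbQ)[\widehat{\phi}]/\phi\left(E(\bbQ)[2]\right)\to S^{(\phi)}(E/\bbQ)\to S^{(2)}(E/\bbQ)\xrightarrow{\phi_{S}} S^{(\widehat{\phi})}(\widehat{E}/\bbQ)\to \cC_{S}\to 0 .
\]
One alternating dimension count, combined with the $2$-Selmer sequence and the identity $\dim_{2}E(\bbQ)[2]+\dim_{2}\widehat{E}(\bbQ)[\widehat{\phi}]/\phi(E(\bbQ)[2])=2$, then gives the formula. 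The checks you list are the right ones and they suffice:
\begin{itemize}
\item $\delta$ is the restriction of the Kummer coboundary for $\phi$ (naturality of connecting maps), so it lands in $S^{(\phi)}(E/\bbQ)$;
\item the local conditions are compatible, because $H^{1}(\bbQ_{v},E[\phi])\to H^{1}(\bbQ_{v},E)$ factors through $H^{1}(\bbQ_{v},E[2])$;
\item lifting $\eta$ to $\xi$ gives exactness at $S^{(2)}$.
\end{itemize}

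\textbf{The paper's route.} The paper never writes down this four-term sequence. Instead it:
\begin{itemize}
\item splits each of $S^{(\phi)}$ and $S^{(\widehat{\phi})}$ into a Mordell--Weil part and a $\Sha$ part;
\item uses the sequence on p.~336 of Silverman to get $\dim_{2}\widehat{E}(\bbQ)/\phi(E(\bbQ))+\dim_{2}E(\bbQ)/\widehat{\phi}(\widehat{E}(\bbQ))=\rank E(\bbQ)+2$;
\item introduces $\cC_{\Sha}=\coker\left(\Sha(E/\bbQ)[2]\to\Sha(\widehat{E}/\bbQ)[\widehat{\phi}]\right)$;
\item applies the snake lemma to the map from the $2$-Selmer sequence to the $\widehat{\phi}$-Selmer sequence to show $\cC_{S}\cong\cC_{\Sha}$.
\end{itemize}

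\textbf{What each buys.} Your argument is shorter and avoids $\Sha$ and the snake lemma altogether. It also exhibits directly the sequence underlying Fisher's setup in Lemma~\ref{lem:Cphi-1}, where $S_{2}'=\Image(\phi_{S})$. The cost is some cohomological bookkeeping that Silverman does not state in that form. The paper's argument uses only results stated explicitly in Chapter~X of Silverman. As a by-product it gives an interpretation of $\cC_{S}$ in terms of Tate--Shafarevich groups:
\[
\dim_{2}\Sha(E/\bbQ)[\phi]+\dim_{2}\Sha(\widehat{E}/\bbQ)[\widehat{\phi}]=\dim_{2}\Sha(E/\bbQ)[2]+\dim_{2}\cC_{S}.
\]
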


For any abelian group $A$, we write $A[m]$ for the $m$-torsion subgroup of $A$,
that is, the subgroup of elements, $a \in A$ such that $ma$ is the
identity element of $A$.

\begin{proof}
We start with the exact sequence in Theorem~X.4.2(a) of \cite{S09} applied to
the isogenies $\phi$ and $\widehat{\phi}$:
\begin{align}
\label{eq:exact1}
& 0 \rightarrow \widehat{E}(\bbQ)/\phi \left( E(\bbQ) \right) \rightarrow
S^{(\phi)}(E/\bbQ) \rightarrow \Sha(E/\bbQ)[\phi] \rightarrow 0 \\
& 0 \rightarrow E(\bbQ)/\widehat{\phi} \left( \widehat{E}(\bbQ) \right) \rightarrow
S^{(\widehat{\phi})} \left( \widehat{E}/\bbQ \right) \rightarrow
\Sha \left( \widehat{E}/\bbQ \right) \left[ \widehat{\phi} \right] \rightarrow 0.
\end{align}

Considering these objects as $\bbF_{2}$-vector spaces, from these two exact
sequences, we obtain
\begin{align}
\label{eq:dim1}
\dim_{2} S^{(\phi)}(E/\bbQ)
& =\dim_{2} \widehat{E}(\bbQ)/\phi \left( E(\bbQ) \right) + \dim_{2} \Sha(E/\bbQ)[\phi] \\
\label{eq:dim2}
\dim_{2} S^{(\widehat{\phi})} \left( \widehat{E}/\bbQ \right)
&= \dim_{2} E(\bbQ)/\widehat{\phi} \left( \widehat{E}(\bbQ) \right)
+ \dim_{2} \Sha \left( \widehat{E}/\bbQ \right) \left[ \widehat{\phi} \right].
\end{align}

Now apply the exact sequence near the top of page~336 of \cite{S09} with $m=2$,
the degree of the isogeny $\phi$:
\[
0 \rightarrow \frac{\widehat{E}(\bbQ) \left[ \widehat{\phi} \right]}{\phi \left( E(\bbQ)[2] \right)}
\rightarrow \frac{\widehat{E}(\bbQ)}{\phi \left( E(\bbQ) \right)}
\stackrel{\widehat{\phi}}{\longrightarrow} \frac{E(\bbQ)}{2E(\bbQ)}
\rightarrow \frac{E(\bbQ)}{\widehat{\phi} \left( \widehat{E}(\bbQ) \right)} \rightarrow 0.
\]

Taking dimensions here, we obtain
\begin{equation}
\label{eq:dim3}
\dim_{2} \frac{\widehat{E}(\bbQ) \left[ \widehat{\phi} \right]}{\phi \left( E(\bbQ)[2] \right)}
+ \dim_{2} \frac{E(\bbQ)}{2E(\bbQ)}
= \dim_{2}\frac{\widehat{E}(\bbQ)}{\phi \left( E(\bbQ) \right)}
+ \dim_{2} \frac{E(\bbQ)}{\widehat{\phi} \left( \widehat{E}(\bbQ) \right)}.
\end{equation}

We have $\dim_{2} E(\bbQ)/2E(\bbQ)=\rank E(\bbQ) + \dim_{2} E(\bbQ)[2]$.

We also have $\dim_{2} \widehat{E}(\bbQ) \left[ \widehat{\phi} \right]=1$, since
$\ker \left( \widehat{\phi} \right)= \{ O,(0,0) \}$.

The restriction $\phi: E(\bbQ)[2] \rightarrow \widehat{E}(\bbQ) \left[ \widehat{\phi} \right]$
has kernel $E(\bbQ)[\phi]$, which has dimension $1$, so
$\dim_{2} \phi \left( E(\bbQ)[2] \right)=\dim_{2} E(\bbQ)[2]-1$. Thus
$\dim_{2} \widehat{E}(\bbQ) \left[ \widehat{\phi} \right]/\phi \left( E(\bbQ)[2] \right)
=1-\left( \dim_{2} E(\bbQ)[2]-1 \right)=2-\dim_{2} E(\bbQ)[2]$. So the left-hand
side of \eqref{eq:dim3} is
$2-\dim_{2} E(\bbQ)[2]+\rank E(\bbQ) + \dim_{2} E(\bbQ)[2]=\rank E(\bbQ)+2$. That is,
\[
\dim_{2} \frac{\widehat{E}(\bbQ)}{\phi \left( E(\bbQ) \right)}
+ \dim_{2} \frac{E(\bbQ)}{\widehat{\phi} \left( \widehat{E}(\bbQ) \right)}
=\rank E(\bbQ)+2.
\]

Adding \eqref{eq:dim1} and \eqref{eq:dim2} together and applying this relationship,
we obtain
\begin{equation}
\label{eq:dim4}
\dim_{2} S^{(\phi)}(E/\bbQ) + \dim_{2} S^{(\widehat{\phi})} \left( \widehat{E}/\bbQ \right)
=\rank E(\bbQ)+2 + \dim_{2} \Sha(E/\bbQ)[\phi]
+ \dim_{2} \Sha \left( \widehat{E}/\bbQ \right) \left[ \widehat{\phi} \right].
\end{equation}

We now consider the $\Sha$ terms there and relate them to $\Sha(E/\bbQ)[2]$.

The isogeny $\phi: E \rightarrow \widehat{E}$ induces a map $\phi_{\Sha}:
\Sha (E/\bbQ) \rightarrow \Sha \left( \widehat{E}/\bbQ \right)$. Since
$\widehat{\phi} \circ \phi=[2]$, we have
\[
\phi_{\Sha} \left( \Sha(E/\bbQ)[2] \right)
\subseteq \Sha \left( \widehat{E}/\bbQ \right) \left[ \widehat{\phi} \right].
\]

Also
\[
\ker \left( \phi_{\Sha}: \Sha(E/\bbQ)[2]
\rightarrow \Sha \left( \widehat{E}/\bbQ \right) \left[ \widehat{\phi} \right] \right)
= \left\{ \xi \in \Sha(E/\bbQ)[2]: \phi_{\Sha}(\xi)=0 \right\},
\]
$\Sha (E/\bbQ)[\phi]= \left\{ \xi \in \Sha(E/\bbQ): \phi_{\Sha}(\xi)=0 \right\}$ and
$\Sha (E/\bbQ)[\phi] \subseteq \Sha (E/\bbQ)[2]$ (using $\widehat{\phi} \circ \phi=[2]$).
So
\[
\ker \left( \phi_{\Sha}: \Sha(E/\bbQ)[2]
\rightarrow \Sha \left( \widehat{E}/\bbQ \right) \left[ \widehat{\phi} \right] \right)
= \Sha (E/\bbQ)[\phi].
\]

Hence there is an exact sequence
\[
0 \rightarrow \Sha(E/\bbQ)[\phi] \rightarrow \Sha (E/\bbQ)[2]
\stackrel{\phi_{\Sha}}{\longrightarrow}
\Sha \left( \widehat{E}/\bbQ \right) \left[ \widehat{\phi} \right]
\rightarrow \cC_{\Sha} \rightarrow 0,
\]
where $\cC_{\Sha}=\coker \left( \phi_{\Sha}: \Sha(E/\bbQ)[2] \rightarrow \Sha \left( \widehat{E}/\bbQ \right) \left[ \widehat{\phi} \right] \right)$.

Taking dimensions in that exact sequence, we have
\[
\dim_{2} \Sha(E/\bbQ)[\phi]
+\dim_{2} \Sha \left( \widehat{E}/\bbQ \right) \left[ \widehat{\phi} \right]
= \dim_{2} \Sha (E/\bbQ)[2] + \dim_{2} \cC_{\Sha}.
\]

Applying this to \eqref{eq:dim4} above, we have
\begin{equation}
\label{eq:dim5}
\dim_{2} S^{(\phi)}(E/\bbQ) + \dim_{2} S^{(\widehat{\phi})} \left( \widehat{E}/\bbQ \right)
=\rank E(\bbQ)+2 + \dim_{2} \Sha (E/\bbQ)[2] + \dim_{2} \cC_{\Sha}.
\end{equation}

However, it will be easier for us to work with $\cC_{S}$ and its elements than
with $\cC_{\Sha}$. We now show that these two cokernels are isomorphic.

From Theorem~X.4.2(a) in \cite{S09}, we have
\[
0 \rightarrow E(\bbQ)/2E(\bbQ) \rightarrow
S^{(2)}(E/\bbQ) \rightarrow \Sha(E/\bbQ)[2] \rightarrow 0
\]
and
\begin{equation}
\label{eq:exact2}
0 \rightarrow E(\bbQ)/\widehat{\phi} \left( \widehat{E}(\bbQ) \right) \rightarrow
S^{(\widehat{\phi})} \left( \widehat{E}/\bbQ \right) \rightarrow
\Sha \left( \widehat{E}/\bbQ \right) \left[ \widehat{\phi} \right] \rightarrow 0.
\end{equation}

We make this into the following larger commutative diagram:
\begin{equation}
\label{eq:commutative}
\begin{tikzcd}
0 \arrow[r] & E(\bbQ)/2E(\bbQ) \arrow[d, "f"] \arrow{r} & S^{(2)}(E/\bbQ)
  \arrow[d, "\phi_{S}"] \arrow{r} & \Sha(E/\bbQ)[2] \arrow[d, "\phi_{\Sha}"] \arrow{r} & 0 \\
0 \arrow{r} & E(\bbQ)/\widehat{\phi} \left( \widehat{E}(\bbQ) \right) \arrow{r}
& S^{(\widehat{\phi})} \left( \widehat{E}/\bbQ \right) \arrow{r}
& \Sha \left( \widehat{E}/\bbQ \right) \left[ \widehat{\phi} \right] \arrow{r} & 0.
\end{tikzcd}
\end{equation}

The left vertical map $f$ is the natural quotient map
$E(\bbQ)/2E(\bbQ) \rightarrow
E(\bbQ)/\widehat{\phi} \left( \widehat{E}(\bbQ) \right)$. It is surjective because
$2E(\bbQ)=\widehat{\phi} \circ \phi \left( E(\bbQ) \right)
\subseteq \widehat{\phi} \left( \widehat{E}(\bbQ) \right)$. Since it is surjective,
its cokernel is $0$.

The snake lemma tells us that $0 \rightarrow \ker(f) \rightarrow \ker \left( \phi_{S} \right)
\rightarrow \ker \left( \phi_{\Sha} \right) \rightarrow \coker(f) \rightarrow \coker \left( \phi_{S} \right)
\rightarrow \coker \left( \phi_{\Sha} \right) \rightarrow 0$. Since $\coker(f)=0$,
we have $0 \rightarrow \coker \left( \phi_{S} \right) \rightarrow \coker \left( \phi_{\Sha} \right) \rightarrow 0$.
Hence $\cC_{S}=\coker \left( \phi_{S} \right) \cong \coker \left( \phi_{\Sha} \right)=\cC_{\Sha}$,
as stated. The proposition now follows from this and \eqref{eq:dim5}.
\end{proof}

\begin{lemma}
\label{lem:Cphi-1}
The order of $\cC_{S}$ is a square.
\end{lemma}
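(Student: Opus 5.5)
The plan is to use the isomorphism $\cC_{S}\cong\cC_{\Sha}$ from the proof of Proposition~\ref{prop:2.1}. We then show that $\cC_{\Sha}=\Sha(\widehat{E}/\bbQ)[\widehat{\phi}]/\phi_{\Sha}\left(\Sha(E/\bbQ)[2]\right)$ carries a nondegenerate alternating $\bbF_{2}$-valued bilinear pairing. Since $\cC_{\Sha}$ is an $\bbF_{2}$-vector space, such a pairing forces $\dim_{2}\cC_{\Sha}$ to be even, so $|\cC_{S}|=2^{\dim_{2}\cC_{S}}$ is a square. This is the classical argument of Cassels (``Arithmetic on curves of genus 1, VIII''), and I would follow it closely.

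\emph{Step 1: the pairing.} Let $\langle\cdot,\cdot\rangle_{\widehat{E}}$ be the Cassels--Tate pairing on $\Sha(\widehat{E}/\bbQ)$, with values in $\bbQ/\bbZ$. It is alternating because $\widehat{E}$ is an elliptic curve, so its polarisation comes from a rational divisor (Cassels; Poonen--Stoll). Restrict it to $\Sha(\widehat{E}/\bbQ)[\widehat{\phi}]\subseteq\Sha(\widehat{E}/\bbQ)[2]$. Its values then lie in $\tfrac12\bbZ/\bbZ\cong\bbF_{2}$, and the restriction is still alternating.

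\emph{Step 2: identify the radical.} We use the functoriality $\langle\phi_{\Sha}c,b\rangle_{\widehat{E}}=\langle c,\widehat{\phi}_{\Sha}b\rangle_{E}$. If $c\in\Sha(E/\bbQ)[2]$ and $b\in\Sha(\widehat{E}/\bbQ)[\widehat{\phi}]$, then $\widehat{\phi}_{\Sha}b=0$, so $\phi_{\Sha}\left(\Sha(E/\bbQ)[2]\right)$ lies in the radical. Conversely, suppose $a\in\Sha(\widehat{E}/\bbQ)[\widehat{\phi}]$ pairs trivially with all of $\Sha(\widehat{E}/\bbQ)[\widehat{\phi}]$. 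Cassels' lemma then says that $a=\phi_{\Sha}(c)$ for some $c\in\Sha(E/\bbQ)$: the annihilator of $\ker\widehat{\phi}_{\Sha}$ is the image of the adjoint map $\phi_{\Sha}$, which is proved by comparing the $\phi$- and $\widehat{\phi}$-descent sequences and using local Tate duality. Moreover $2c=\widehat{\phi}_{\Sha}\phi_{\Sha}c=\widehat{\phi}_{\Sha}a=0$, so $c\in\Sha(E/\bbQ)[2]$. Hence the radical is exactly $\phi_{\Sha}\left(\Sha(E/\bbQ)[2]\right)$, and the pairing descends to a nondegenerate alternating form on $\cC_{\Sha}$.

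\emph{Main obstacle.} The difficulty is the converse inclusion in Step~2 when $\Sha$ may have a nontrivial divisible part, since the Cassels--Tate pairing is only nondegenerate modulo divisible elements. I would first try to avoid this altogether by citing Cassels' result directly: for an isogeny $\phi$, the pairing on $\Sha(\widehat{E})[\widehat{\phi}]$ has kernel $\phi(\Sha(E)[2])$, which is established at the level of Selmer groups. Equivalently, one defines the pairing on $S^{(\widehat{\phi})}(\widehat{E}/\bbQ)$ via local Hilbert symbols and shows its kernel is $\phi_{S}\left(S^{(2)}(E/\bbQ)\right)$, which gives $\cC_{S}$ directly. Failing that, I would restrict to the image in $\Sha/\Sha_{\mathrm{div}}$ and check that elements of $\Sha(\widehat{E}/\bbQ)[\widehat{\phi}]$ that are divisible in $\Sha(\widehat{E}/\bbQ)$ already lie in $\phi_{\Sha}(\Sha(E/\bbQ)[2])$. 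The input for this is that divisibility by $2=\phi\circ\widehat{\phi}$ in $\Sha(\widehat{E}/\bbQ)$ passes through $\phi_{\Sha}$.
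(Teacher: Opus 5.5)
Your argument is correct, but it works at the level of $\Sha$, whereas the paper works directly with Selmer groups. The paper cites Fisher's Theorem~2.1 (with $p=2$, $m=1$), which supplies an alternating pairing $\Theta_{1}'$ on $S^{(\widehat{\phi})}(\widehat{E}/\bbQ)$ whose kernel is exactly the image of $S^{(2)}(E/\bbQ)$. That pairing therefore descends to a nondegenerate alternating form on $\cC_{S}$ itself, and $\dim_{2}\cC_{S}$ is even by linear algebra. This is precisely your fallback option. It needs neither the isomorphism $\cC_{S}\cong\cC_{\Sha}$ nor any discussion of divisible elements, because Fisher's pairing is in effect the Cassels--Tate pairing pulled back to the Selmer group, with the kernel already identified there.

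Your route instead combines the snake-lemma isomorphism from the proof of Proposition~\ref{prop:2.1} with textbook properties of the Cassels--Tate pairing: it is alternating, $\phi_{\Sha}$ and $\widehat{\phi}_{\Sha}$ are adjoint, and its kernel is the maximal divisible subgroup. The obstacle you flag is not a genuine one. Since $\widehat{\phi}_{\Sha}\circ\phi_{\Sha}=[2]$ and $\phi_{\Sha}\circ\widehat{\phi}_{\Sha}=[2]$, these maps carry the maximal divisible subgroups of $\Sha(E/\bbQ)[2^{\infty}]$ and $\Sha(\widehat{E}/\bbQ)[2^{\infty}]$ onto each other. Combine this with perfectness of the pairing on the finite quotients by those divisible subgroups and with adjointness. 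This yields $\left(\ker\widehat{\phi}_{\Sha}\right)^{\perp}=\Image\phi_{\Sha}$ unconditionally, and your Step~2 then goes through as written.

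So your version is more self-contained, resting only on standard properties of the Cassels--Tate pairing. The paper's version is a one-line citation of a result that packages exactly this work at the Selmer level.
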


\begin{proof}
This follows from Theorem~2.1 of \cite{Fi} with $p=2$. His $S_{1}$ and $S_{1}'$
are the same as our $S^{(\phi)} \left( E/\bbQ \right)$ and
$S^{(\widehat{\phi})} \left( \widehat{E}/\bbQ \right)$, respectively.

For $m=2$, Fisher defines $S_{2}'$ to be the image of
$S^{(2)}(E/\bbQ)$ in $S_{1}'$ under the map induced by $\phi$. Applying his
Theorem~2.1(i) with $m=1$ and using his notation, there are alternating pairings
\[
\Theta_{1}: S_{1} \times S_{1} \rightarrow \bbF_{p}
\text{ and }
\Theta_{1}': S_{1}' \times S_{1}' \rightarrow \bbF_{p},
\]
with kernels $S_{2}$ and $S_{2}'$. Hence the pairing $\Theta_{1}'$ gives a
nondegenerate alternating pairing
on $S_{1}'/S_{2}' \times S_{1}'/S_{2}'$, where
$S_{1}'/S_{2}'=S^{(\widehat{\phi})} \left( \widehat{E}/\bbQ \right)/ \Image \left( S^{(2)}(E/\bbQ) \rightarrow S^{(\widehat{\phi})} \left( \widehat{E}/\bbQ \right) \right)$.
This quotient is our $\cC_{S}$, defined above.

From linear algebra, we know that a finite-dimensional vector space with a
nondegenerate alternating bilinear pairing has even dimension.
\end{proof}

\begin{lemma}
\label{lem:Cphi-2}
{\rm (a)} Put $\dim_{2} S^{(\widehat{\phi})} \left( \widehat{E}/\bbQ \right)=s$
and $\dim_{2} E(\bbQ)/\widehat{\phi} \left( \widehat{E}(\bbQ) \right)=r$. We have
$\dim_{2} \cC_{S}$ is an even integer satisfying $0 \leq \dim_{2} \cC_{S} \leq s-r$.

\noindent
{\rm (b)} If $\dim_{2} S^{(\widehat{\phi})} \left( \widehat{E}/\bbQ \right)=1$,
then $\dim_{2} \cC_{S}=0$.

\noindent
{\rm (c)} If $\dim_{2} S^{(\widehat{\phi})} \left( \widehat{E}/\bbQ \right)=2$ and
$\widehat{E}(\bbQ)[2] \cong \bbZ/2\bbZ$, then
$E(\bbQ)/\widehat{\phi} \left( \widehat{E}(\bbQ) \right) \neq \{ 0 \}$ and so
$\dim_{2} \cC_{S}=0$.
\end{lemma}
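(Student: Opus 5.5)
For part~(a), the plan is to use the commutative diagram \eqref{eq:commutative} and the fact that the bottom row is exact. The key observation is that the image of $\phi_{S}$ contains the image of $E(\bbQ)/\widehat{\phi}\bigl(\widehat{E}(\bbQ)\bigr)$ in $S^{(\widehat{\phi})}\bigl(\widehat{E}/\bbQ\bigr)$.

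To see this, note that $f$ is surjective and the left square commutes. Hence every element in the image of the bottom-left map is $\phi_{S}$ of the image of some element of $E(\bbQ)/2E(\bbQ)$.

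The bottom-left map is injective, so this image has dimension $r$. Therefore $\dim_{2}\Image(\phi_{S})\geq r$, and so
\[
\dim_{2}\cC_{S}=s-\dim_{2}\Image(\phi_{S})\leq s-r.
\]
Evenness follows directly from Lemma~\ref{lem:Cphi-1}, since the order of $\cC_{S}$ is a square power of $2$. Nonnegativity is trivial.

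For part~(b), we want $r\geq 1$, so that $s-r\leq 0$. It suffices to show the point $(0,0)\in E(\bbQ)$ is not in $\widehat{\phi}\bigl(\widehat{E}(\bbQ)\bigr)$, or some other torsion point survives.

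I would argue more cleanly via the connecting map: $E(\bbQ)/\widehat{\phi}(\widehat{E}(\bbQ))$ injects into $S^{(\widehat{\phi})}$. The image of $(0,0)$ under the standard Kummer map is the class of $a^{2}-4b$ in the $\widehat{E}$ setting (Silverman's Proposition~X.4.9 gives $T\mapsto b$ for the curve whose $\phi$-descent is taken; for the dual it is $\widehat{b}=a^{2}-4b$).

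Rather than depending on that, a robust route is to note that $s\geq r$, and to split by the value of $r$:
\begin{itemize}
\item if $r\geq 1$, then $s-r\leq 0$ and we are done;
\item if $r=0$, then $s-r=1$, and evenness together with $0\leq\dim_{2}\cC_{S}\leq 1$ forces $\dim_{2}\cC_{S}=0$.
\end{itemize}
So (b) follows from (a) alone: with $s=1$ we always have $s-r\leq 1$, and evenness forces $0$. This also covers the case $r=0$, so no torsion argument is needed.

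For part~(c), with $s=2$ it suffices to show $r\geq 1$, since then $s-r\leq 1$ and evenness again gives $0$. So I must show $E(\bbQ)\neq\widehat{\phi}\bigl(\widehat{E}(\bbQ)\bigr)$.

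The plan is to show $(0,0)\in E(\bbQ)$ is not in $\widehat{\phi}\bigl(\widehat{E}(\bbQ)\bigr)$. Suppose instead $\widehat{\phi}(P)=(0,0)$ for some $P\in\widehat{E}(\bbQ)$. Then $2P=\phi\bigl(\widehat{\phi}(P)\bigr)$ up to the identification $\phi\circ\widehat{\phi}=[2]$, and $\phi(0,0)=O$, so $P\in\widehat{E}(\bbQ)[2]$. By hypothesis $\widehat{E}(\bbQ)[2]=\{O,(0,0)\}$, and $\widehat{\phi}$ kills both of these, giving $O\neq(0,0)$ — a contradiction.

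The main subtlety is checking that $\widehat{\phi}(\widehat{E}[2])\subseteq\{O\}$ on rational points. This holds because $\ker\widehat{\phi}=\{O,(0,0)\}$ on $\widehat{E}$.
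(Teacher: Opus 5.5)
Your proof is correct and follows the paper's argument essentially step for step. In (a) you use surjectivity of $f$, commutativity of the left square in \eqref{eq:commutative} and Lemma~\ref{lem:Cphi-1}; (b) follows directly from (a); and in (c) you show $(0,0)\notin\widehat{\phi}\bigl(\widehat{E}(\bbQ)\bigr)$ via $\phi\circ\widehat{\phi}=[2]$ and $\widehat{E}(\bbQ)[2]=\ker\widehat{\phi}$, then apply (a). The Kummer-map aside you discard in (b) has the roles reversed: under $E(\bbQ)/\widehat{\phi}(\widehat{E}(\bbQ))\hookrightarrow S^{(\widehat{\phi})}(\widehat{E}/\bbQ)$ the point $(0,0)\in E$ goes to the class of $b$ (e.g.\ $-pq$ for $E_{p,q}$), whereas $a^{2}-4b$ is the image of $(0,0)\in\widehat{E}$ in $S^{(\phi)}(E/\bbQ)$, so it is good that you did not rely on it.
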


\begin{proof}
(a) From \eqref{eq:exact2}, we know that the group
$E(\bbQ)/\widehat{\phi} \left( \widehat{E}(\bbQ) \right)$ injects into
$S^{(\widehat{\phi})} \left( \widehat{E}/\bbQ \right)$. Hence by our assumption
that $\dim_{2} E(\bbQ)/\widehat{\phi} \left( \widehat{E}(\bbQ) \right)=r$, it
follows that the dimension of this image is also $r$.

We showed after \eqref{eq:commutative} that the map $f$ from $E(\bbQ)/2E(\bbQ)$ into
$E(\bbQ)/\widehat{\phi} \left( \widehat{E}(\bbQ) \right)$ is surjective, so the
image of $E(\bbQ)/\widehat{\phi} \left( \widehat{E}(\bbQ) \right)$ in
$S^{(\widehat{\phi})} \left( \widehat{E}/\bbQ \right)$
is contained in $\Image \left( \phi_{S} \right)$.
Hence $\dim_{2} \Image \left( \phi_{S} \right) \geq r$.

Thus
$\dim_{2} \coker \left( \phi_{S} \right)
=\dim_{2} \dfrac{S^{(\widehat{\phi})} \left( \widehat{E}/\bbQ \right)}{\Image \left( \phi_{S} \right)}
\leq s-r$.
By Lemma~\ref{lem:Cphi-1}, it follows that $\dim_{2} \coker \left( \phi_{S} \right)$
is even.

\vspace*{2.0mm}

(b) This is an immediate consequence of part~(a).

\vspace*{2.0mm}

(c) Consider $T=(0,0) \in E(\bbQ)[2]$. Suppose that $T=\widehat{\phi} \left( Q \right)$
for some $Q \in \widehat{E}(\bbQ)$. Then $[2]Q=\phi \circ \widehat{\phi}(Q)=\phi(T)=O$.
So $Q \in \widehat{E}(\bbQ)[2]$.
However, we know that $\widehat{E} \left[ \widehat{\phi} \right]$
is a rational subgroup of order $2$ since $\widehat{\phi}$ is a rational $2$-isogeny.
So $\widehat{E} \left[ \widehat{\phi} \right] \subseteq \widehat{E}[2]$. Since
$\widehat{T}=(0,0)$ generates $\widehat{E} \left[ \widehat{\phi} \right]$, we
actually have $\widehat{E} \left[ \widehat{\phi} \right] \subseteq \widehat{E}(\bbQ)[2]$.
Here $\widehat{E}(\bbQ)[2] \cong \bbZ/2\bbZ$, so
$\widehat{E} \left[ \widehat{\phi} \right]=\widehat{E}(\bbQ)[2] \cong \bbZ/2\bbZ$.

From that we know that $\widehat{\phi}(Q)=O$. This contradicts $\widehat{\phi}(Q)=T$.
So $T \not\in \widehat{\phi} \left( \widehat{E}(\bbQ) \right)$ and the class of
$T$ gives a nonzero element of $E(\bbQ)/\widehat{\phi} \left( \widehat{E}(\bbQ) \right)$.

The remainder of the result follows from part~(a).
%
\end{proof}

\section{Homogeneous Spaces}
\label{sect:homo}

We will use the following result to establish conditions for the local solvability
of the homogeneous spaces that arise in our work.

\begin{lemma}
\label{lem:Cohen}
Let $a$, $b$ and $c$ be nonzero integers such that $a$ and $b$ are fourth-power-free,
$c$ is squarefree and $\gcd(a,b,c) = 1$.

\begin{enumerate}
\item The equation $ax^{4} + by^{4} + cz^{2} = 0$ has a nontrivial solution in
every $\bbQ_{p}$ for which $p \nmid (2abc)$. It has a nontrivial solution in $\bbR$
if and only if $a$, $b$ and $c$ do not all have the same sign.

\item Let $p | (abc)$ with $p \neq 2$. Reorder $a$ and $b$ so that
$v_{p}(a) \leq v_{p}(b)$ and set $\bv=\left( v_{p}(a), v_{p}(b), v_{p}(c) \right)$.

\begin{enumerate}
\item If $\bv=(2, 2, 0)$, then the equation has a nontrivial solution in $\bbQ_{p}$.

\item If $\bv = (0, 2, 1)$ or $\bv = (1, 3, 0)$, then the
equation does not have any nontrivial solutions in $\bbQ_{p}$.

\item If $\bv = (0, 0, 1)$, $\bv = (1, 1, 0)$ or $\bv = (3, 3, 0)$,
then the equation has a nontrivial solution in $\bbQ_{p}$
if and only if $-a/b$ is a fourth power in $\bbF_{p}^{*}$.

\item If $\bv = (0, 2, 0)$, then the equation has a nontrivial
solution in $\bbQ_{p}$ if and only if either $-a/c$ or
$-b/ \left( p^{2}c \right)$ is a square in $\bbF_{p}^{*}$.

\item Otherwise, if $\bv = (0, 1, 0)$ or $\bv = (0, 3, 0)$,
set $\alpha = -a/c$, if $\bv = (0, 1, 1)$ set $\alpha = -b/c$,
if $\bv = (0, 3, 1)$ or $\bv = (1, 2, 0)$ set
$\alpha = -b/\left( p^{2}c \right)$, and if $\bv = (2, 3, 0)$
set $\alpha = -a/ \left( p^{2}c \right)$. The equation has a
nontrivial solution in $\bbQ_{p}$ if and only if $\alpha$
is a square in $\bbF_{p}^{*}$.
\end{enumerate}

\item Assume that $p = 2$. Reorder $a$ and $b$ so that $v_{p}(a) \leq v_{p}(b)$
and set $\bw = \left( v_{p}(a), v_{p}(b), v_{p}(c) \right)$. For $\bw = (0, 3, 0)$,
$(0, 3, 1)$, $(1, 1, 0)$, $(1, 2, 0)$, $(2, 2, 0)$, $(2, 3, 0)$ or $(3, 3, 0)$,
replace $(a, b, c)$ by $(b/8, 2a, 2c)$, $(b/8, 2a, c/2)$, $(a/2, b/2, 2c)$,
$(a/2, b/2, 2c)$, $(a/4, b/4, c)$, $(a/4, b/4, c)$ or $(a/8, b/8, 2c)$ respectively.
Otherwise keep $a$, $b$ and $c$ unchanged. Finally, set
$\bv = \left( v_{p}(a), v_{p}(b), v_{p}(c) \right)$.
\begin{enumerate}
\item If $\bv = (0, 2, 1)$ or $\bv = (1, 3, 0)$, then the equation does not
have any nontrivial solutions in $\bbQ_{2}$.

\item If $\bv = (0,0,0)$, then the equation has a nontrivial solution in $\bbQ_{2}$
if and only if $8 | (a + c)$, $8 | (b + c)$, $16 | (a + b)$ or $16 | (a + b + 4c)$.

\item If $\bv = (0, 0, 1)$, then the equation has a nontrivial solution in $\bbQ_{2}$
if and only if $8 | (a + b)$ or $16 | (a + b + c)$.

\item If $\bv = (0, 1, 0)$, then the equation has a nontrivial solution in $\bbQ_{2}$
if and only if $8 | (a + c)$ or $8 | (a + b + c)$.

\item If $\bv = (0, 2, 0)$, then the equation has a nontrivial solution in $\bbQ_{2}$
if and only if $8 | (a + c)$, $8 | (a + b + c)$ or $16 | (b + 4c)$.

\item If $\bv = (0, 1, 1)$, then the equation has a nontrivial solution in $\bbQ_{2}$
if and only if $16 | (b + c)$.
\end{enumerate}
\end{enumerate}
\end{lemma}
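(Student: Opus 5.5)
This lemma is a local-solvability criterion for the quartic $ax^{4}+by^{4}+cz^{2}=0$, in the form found in Cohen's book (\emph{Number Theory}, Vol.~I, the section on $ax^4+by^4+cz^2=0$). The natural plan is to reproduce that proof: reduce to a small number of valuation patterns, then decide solvability in each pattern by Hensel's lemma.

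\emph{Part (1).} For the real place, if $a,b,c$ all have the same sign, every term has that sign, so only the trivial solution exists. Otherwise we can solve directly: for instance, if $c$ has the opposite sign to $a$, take $y=0$, $x=1$ and $z=\sqrt{-a/c}$. For $p\nmid 2abc$, first find a smooth point modulo $p$ and then lift it by Hensel's lemma. To get the point, fix $y=1$ and count points on the affine curve $cz^{2}=-ax^{4}-b$ over $\bbF_{p}$ with a character-sum (Weil) bound. Small $p$, where that bound is too weak, need a direct check. A shortcut is to use Chevalley–Warning on the quadratic form $aX^{2}+bY^{2}+cZ^{2}$; but we also need $X$ and $Y$ to be squares, so the quartic count (or Cohen's argument) is still required.

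\emph{Part (2), $p$ odd dividing $abc$.} Use the scalings $(x,y,z)\mapsto(px,y,p^{2}z)$ and the like, and divide the whole equation by $p^{4}$ or $p^{2}$. These moves reduce every valuation vector to the listed normal forms, and the reordering of $a,b$ makes the list exhaustive. In each case, a nontrivial solution can be normalised to be primitive. One then reads the equation modulo $p$ (or $p^{2}$) to force congruence conditions on $x,y,z$. For example, when $\bv=(0,0,1)$, reducing mod $p$ gives $ax^{4}+by^{4}\equiv0$. If $x$ and $y$ were not both units, valuations would force $p^{2}\mid c$ or a non-primitive solution. So $-a/b$ must be a fourth power mod $p$, and conversely Hensel lifts such a root. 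The impossible cases $(0,2,1)$ and $(1,3,0)$ come from an infinite-descent contradiction on valuations. The cases (d) and (e) reduce to whether some unit coefficient ratio is a square, because one variable is forced to vanish mod $p$ and the equation becomes a binary quadratic congruence.

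\emph{Part (3), $p=2$.} The listed substitutions again normalise the valuations. Each criterion then comes from checking all unit residues: fourth powers of units are $\equiv1\pmod{16}$, and squares of units are $\equiv1\pmod 8$. This reduces the equation to congruences modulo $16$ or $32$. From there, the strong form of Hensel's lemma ($|f(\xi)|<|f'(\xi)|^{2}$) lifts a solution. The main obstacle is this $2$-adic bookkeeping in case (b), $\bv=(0,0,0)$. There one must split according to which of $x,y,z$ are even, and obtain each of the four divisibility conditions from a separate subcase. For example, $z$ even with $x,y$ odd gives $16\mid a+b$ or $16\mid a+b+4c$, depending on $z\equiv0$ or $2\pmod 4$. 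Since the paper uses the lemma only as a tool, citing Cohen for these verifications is the realistic route.
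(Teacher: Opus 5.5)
Your proposal ends where the paper does: the paper's entire proof is the citation of Proposition~6.5.1 in Cohen's book. Your sketch (a valuation case split followed by Hensel lifting, with $2$-adic congruences modulo $16$ and $32$) is an accurate outline of that proposition's proof, so deferring to Cohen is exactly the intended route.

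Two small simplifications if you were to write it out. First, in part~(2) the thirteen listed vectors already exhaust all possibilities under the hypotheses (fourth-power-free $a,b$, squarefree $c$, $\gcd(a,b,c)=1$), so rescalings are needed only inside individual cases, not for exhaustiveness. Second, in part~(1) the Hasse bound $p+1-2\sqrt{p}>0$ for the smooth genus-one curve gives an $\bbF_{p}$-point for every odd $p\nmid abc$, so no small-prime checks are required.
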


\begin{proof}
This is Proposition~6.5.1 in \cite[pp. 389--390]{Co}.
\end{proof}

\subsection{Homogeneous spaces for $E_{p,q}$}
\label{subsect:E-hom-spaces}

In the notation of the previous section, we have $a=0$ and $b=-pq$. So
$S= \{ \infty, 2, p, q \}$. Hence
\[
\bbQ \left( S,2 \right) = \{ \pm 1, \pm 2, \pm p, \pm q, \pm 2p, \pm 2q, \pm pq, \pm 2pq \}.
\]

$d=1$ and $d=pq$ are always elements of $S^{(\phi)} \left( E_{p,q}/\bbQ \right)$,
so we need not consider the local solvability of their associated homogeneous
spaces here.

If $d<0$, then the left-hand side of $C_{d}: dw^{2}=d^{2}u^{4}+4pqz^{4}$ is non-positive
while the right-hand side is strictly positive. Thus, the corresponding homogeneous
spaces are not solvable over $\bbR$.

This leaves $6$ values of $d$ to be considered: $\{ 2, p, q, 2p, 2q, 2pq \}$.

But notice that only three of the equations $C_{d}$ are actually distinct modulo
squares. The equation for $d=2$ is the same as the equation for $d=2pq$ modulo
squares. Similarly, the equations for $d=p$ and $d=q$ are the same,
as are the equations for $d=2p$ and $d=2q$, modulo squares.

We then consider the solvability of the following homogeneous spaces over local
fields.
\begin{center}
\begin{table}[h]
\begin{tabular}{|c|c|l|}\hline
\multicolumn{1}{|c|}{Label} & \multicolumn{1}{|c|}{$d$} & \multicolumn{1}{|c|}{Equation} \\ \hline
$C_{2}$   &    $2$  & $w^{2} =  2u^{4}+2pqz^{4}$ \\[1pt] \hline
$C_{p}$   &    $p$  & $w^{2} =  pu^{4}+ 4qz^{4}$ \\[1pt] \hline
$C_{2p}$  &   $2p$  & $w^{2} = 2pu^{4}+ 2qz^{4}$ \\[1pt] \hline
\end{tabular}
\caption{Homogeneous spaces for $E_{p,q}$}
\label{table:H-homogeneous spaces}
\end{table}
\end{center}

\begin{lemma}
\label{lem:T1}
The homogeneous space defined by $C_{2}: w^{2}=2u^{4}+2pqz^{4}$ has nontrivial
local solutions in $\bbR$ and in $\bbQ_{\ell}$ for all primes $\ell$ if and
only if $p, q \equiv \pm 1 \pmod{8}$ and either $pq \equiv 7 \pmod{8}$ or
$pq \equiv 1 \pmod{16}$.
\end{lemma}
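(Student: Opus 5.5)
The plan is to apply Lemma~\ref{lem:Cohen} directly. Write $C_{2}$ as
\[
2u^{4}+2pq\,z^{4}+(-1)w^{2}=0,
\]
so that in the notation of Lemma~\ref{lem:Cohen} we take $a=2$, $b=2pq$, $c=-1$, with $(u,z,w)$ playing the role of $(x,y,z)$. The hypotheses are satisfied. Indeed, $a$ and $b$ are fourth-power-free because $p,q$ are distinct odd primes, $c$ is squarefree, and $\gcd(a,b,c)=1$. The proof then checks each place $\ell \in \{\infty\} \cup \{\ell \text{ prime}\}$ in turn and intersects the resulting conditions.

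\emph{Archimedean and unramified places.} Since $a,b>0>c$, not all three coefficients have the same sign. So part~(1) of Lemma~\ref{lem:Cohen} gives a real point. It also gives points over $\bbQ_{\ell}$ for every $\ell \nmid 2abc$, that is, for every $\ell \notin \{2,p,q\}$. Hence only $\ell=2,p,q$ impose conditions.

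\emph{The place $\ell=p$ (and symmetrically $q$).} Here $v_{p}(a)=0$, $v_{p}(b)=1$ and $v_{p}(c)=0$, so $\bv=(0,1,0)$ and we are in case~(2)(e). There $\alpha=-a/c=2$, so $C_{2}$ is solvable over $\bbQ_{p}$ if and only if $2$ is a square in $\bbF_{p}^{*}$. By the second supplementary law this holds exactly when $p\equiv \pm1 \pmod 8$. The same argument with the roles of $p$ and $q$ exchanged gives $q\equiv \pm 1\pmod 8$.

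\emph{The place $\ell=2$.} Here $\bw=(1,1,0)$, so part~(3) instructs us to replace $(a,b,c)$ by $(a/2,b/2,2c)=(1,pq,-2)$. Then $\bv=(0,0,1)$ and case~(3)(c) applies. It says $C_{2}$ is solvable over $\bbQ_{2}$ if and only if $8 \mid (1+pq)$ or $16 \mid (1+pq-2)$. These are exactly $pq\equiv 7 \pmod 8$ or $pq \equiv 1 \pmod{16}$.

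Combining the three local conditions gives precisely the statement of the lemma. The main point needing care is the bookkeeping in Lemma~\ref{lem:Cohen}: getting the ordering of $a,b$ right and correctly applying the $2$-adic substitution rule. Once the valuation vectors are identified, each step is a direct reading of the cited proposition, so there is no genuine obstacle beyond this case identification.
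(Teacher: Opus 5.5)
Your proposal is correct and follows the paper's proof essentially step for step. You use the same choice $(a,b,c)=(2,2pq,-1)$ in Lemma~\ref{lem:Cohen}, with part~(1) for $\bbR$ and $\ell\nmid 2pq$, case~(2)(e) with $\alpha=2$ at $p$ and $q$, and the $2$-adic replacement by $(1,pq,-2)$ followed by case~(3)(c).
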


\begin{proof}
Here, and in our proofs for all the homogeneous-space lemmas, we will use
Lemma~\ref{lem:Cohen} to establish the lemma.

We apply Lemma~\ref{lem:Cohen} with $a=2$, $b=2pq$ and $c=-1$. By
part~(1) of that lemma, since $a$, $b$ and $c$ do not all have the same
sign, there is always a nontrivial solution in $\bbR$.

Also, by part~(1) of Lemma~\ref{lem:Cohen}, there is a nontrivial solution in
$\bbQ_{\ell}$ for all $\ell \nmid (2pq)$.

If $\ell$ is either $p$ or $q$, then $v_{\ell}(a)=0 \leq v_{\ell}(b)=1$ and
$\bv=(0,1,0)$ in the notation of Lemma~\ref{lem:Cohen}. So part~(2-e) of
Lemma~\ref{lem:Cohen}
applies. $\alpha=-a/c=2$, which is a square in $\bbF_{\ell}^{*}$ if and only if
$\ell \equiv \pm 1 \pmod{8}$. Hence there is a local solution in $\bbQ_{p}$ and in
$\bbQ_{q}$ if and only if $p,q \equiv \pm 1 \pmod{8}$.

Lastly, we consider $\ell=2$. Here $\bw=(1,1,0)$, so by part~(3) of Lemma~\ref{lem:Cohen},
we replace $(a,b,c)$ with $(1,pq,-2)$ and put $\bv=(0,0,1)$. This means we are
in part~(3-c) of Lemma~\ref{lem:Cohen}. Since $a+b=pq+1$ and $a+b+c=pq-1$,
there is a nontrivial solution in $\bbQ_{2}$ if and only if $8|(pq+1)$ or
$16|(pq-1)$.
\end{proof}

\begin{lemma}
\label{lem:T2}
The homogeneous space $C_{p}: w^{2} =  pu^{4}+ 4qz^{4}$ has nontrivial local
solutions in $\bbR$ and in $\bbQ_{\ell}$ for all primes $\ell$ if and only if
$\left( \dfrac{p}{q} \right)=1$ and $\left( \dfrac{q}{p} \right)=1$ both hold
and at least one of $p \equiv 1 \pmod{4}$ or $q \equiv 1 \pmod{4}$ holds.
\end{lemma}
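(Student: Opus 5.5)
The plan is to apply Lemma~\ref{lem:Cohen} directly to $C_{p}$, as in the proof of Lemma~\ref{lem:T1}. We write the equation as $pu^{4}+4qz^{4}-w^{2}=0$ and take $a=p$, $b=4q$, $c=-1$. These satisfy the hypotheses: $a,b$ are fourth-power-free, $c$ is squarefree and $\gcd(a,b,c)=1$. We then check local solvability place by place.

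\emph{Real place and primes $\ell \nmid 2pq$.} By part~(1), since $a,b>0$ and $c<0$, there is always a nontrivial real solution. Also by part~(1), there is a nontrivial solution in $\bbQ_{\ell}$ for every $\ell\nmid 2pq$.

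\emph{The primes $p$ and $q$.} For $\ell=q$ we have $v_{q}(a)=0\le v_{q}(b)=1$, so $\bv=(0,1,0)$. Part~(2-e) gives $\alpha=-a/c=p$, so there is a $q$-adic solution if and only if $\left(\frac{p}{q}\right)=1$. For $\ell=p$ we must reorder, taking $a=4q$ and $b=p$. Again $\bv=(0,1,0)$, now with $\alpha=-a/c=4q$. Since $4$ is a square, there is a $p$-adic solution if and only if $\left(\frac{q}{p}\right)=1$.

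\emph{The prime $2$.} Here $\bw=(0,2,0)$, which is not in the list of triples to be replaced, so $\bv=(0,2,0)$ and part~(3-e) applies. It says there is a $2$-adic solution if and only if at least one of the following holds:
\begin{itemize}
\item $8\mid(a+c)=p-1$;
\item $8\mid(a+b+c)=p+4q-1$;
\item $16\mid(b+4c)=4q-4$.
\end{itemize}
The third condition is exactly $q\equiv 1 \pmod{4}$. Since $q$ is odd, $4q\equiv 4 \pmod{8}$, so the second condition is $p\equiv 5 \pmod{8}$. Combined with the first condition, $p\equiv 1 \pmod{8}$, the first two together say precisely $p\equiv 1\pmod{4}$. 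Hence there is a $2$-adic solution if and only if $p\equiv1$ or $q\equiv 1 \pmod{4}$.

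Putting the places together yields exactly the stated conditions. The main place where care is needed is the $2$-adic step: one must confirm that no replacement rule in part~(3) applies to $(0,2,0)$ and then reduce the three divisibility conditions correctly. One could also note that, under the mod-$4$ condition, quadratic reciprocity makes the two Legendre-symbol conditions equivalent. We keep both, however, since that is how the statement is phrased.
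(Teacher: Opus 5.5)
Your proposal is correct and matches the paper's proof essentially step for step. You make the same choice $a=p$, $b=4q$, $c=-1$ in Lemma~\ref{lem:Cohen}, use the same reordering at $\ell=p$, and apply part~(3-e) at $\ell=2$, reducing the three divisibility conditions to $p\equiv 1\pmod{4}$ or $q\equiv 1\pmod{4}$ exactly as the paper does.
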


\begin{proof}
We apply Lemma~\ref{lem:Cohen} here with $a=p$, $b=4q$ and $c=-1$. By
part~(1) of that lemma, since $a$, $b$ and $c$ do not all have the same
sign, there is always a nontrivial solution in $\bbR$.

Also, by part~(1) of Lemma~\ref{lem:Cohen}, there is a nontrivial solution in
$\bbQ_{\ell}$ for all $\ell \nmid (2pq)$.

For $q$, we have $v_{q}(a)=0 \leq v_{q}(b)=1$ and $\bv=(0,1,0)$. Part~(2-e) of
Lemma~\ref{lem:Cohen} applies. We put $\alpha=-a/c=p$ and we find that there is
a local solution in $\bbQ_{q}$ if and only if $\left( \dfrac{p}{q} \right)=1$.

To consider $p$, we must reorder the terms in our equation, putting $a=4q$, $b=p$
and $c=-1$. So $v_{p}(a)=0 \leq v_{p}(b)=1$ and $\bv=(0,1,0)$ again, as with $q$.
From part~(2-e) of Lemma~\ref{lem:Cohen}, $\alpha=4q$, so there is a local solution
in $\bbQ_{p}$ if and only if $\left( \dfrac{q}{p} \right)=1$.

To consider $\ell=2$, we have $\bv=\bw=(0,2,0)$, using our initial ordering of
the terms. Part~(3-e) of Lemma~\ref{lem:Cohen} applies. There is a nontrivial
solution in $\bbQ_{2}$ if and only if $8|(p-1)$,
$8|(p+4q-1)$ or $16|(4q-4)$.
The condition $8|(p+4q-1)$ holds if and only if
$p \equiv 5 \pmod{8}$. Combining this with $p \equiv 1 \pmod{8}$ (from $8|(p-1)$)
and $q \equiv 1 \pmod{4}$ (from $16|(4q-4)$), the lemma follows.
\end{proof}

\begin{lemma}
\label{lem:T3}
The homogeneous space $C_{2p}: w^{2} = 2pu^{4}+ 2qz^{4}$ has nontrivial local
solutions in $\bbR$ and in $\bbQ_{\ell}$ for all primes $\ell$ if and only if
$\left( \dfrac{2p}{q} \right)=1$ and
$\left( \dfrac{2q}{p} \right)=1$ both hold and at least one of
$p+q \equiv 0 \pmod{8}$ or $p+q \equiv 2 \pmod{16}$ holds.
\end{lemma}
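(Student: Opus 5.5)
We follow the same pattern as Lemmas~\ref{lem:T1} and \ref{lem:T2} and apply Lemma~\ref{lem:Cohen} to $C_{2p}$ written as $2pu^{4}+2qz^{4}-w^{2}=0$, that is, with $a=2p$, $b=2q$ and $c=-1$. These are fourth-power-free, $c$ is squarefree and $\gcd(a,b,c)=1$, so the lemma applies. By part~(1), since $a,b,c$ do not all have the same sign, there is a nontrivial real solution. There is also a nontrivial solution in $\bbQ_{\ell}$ for every $\ell \nmid 2pq$. So only $\ell=q$, $\ell=p$ and $\ell=2$ remain.

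For $\ell=q$ we have $v_{q}(a)=0\le v_{q}(b)=1$ and $v_{q}(c)=0$, so $\bv=(0,1,0)$ and part~(2-e) applies with $\alpha=-a/c=2p$. Hence $C_{2p}$ is solvable in $\bbQ_{q}$ if and only if $\left(\frac{2p}{q}\right)=1$. For $\ell=p$ we first reorder, taking $a=2q$, $b=2p$ and $c=-1$. Then again $\bv=(0,1,0)$, now with $\alpha=2q$, so there is a solution in $\bbQ_{p}$ if and only if $\left(\frac{2q}{p}\right)=1$.

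For $\ell=2$ we have $\bw=(1,1,0)$. By part~(3) of Lemma~\ref{lem:Cohen} we replace $(a,b,c)$ by $(a/2,b/2,2c)=(p,q,-2)$, which gives $\bv=(0,0,1)$. Part~(3-c) then says there is a solution in $\bbQ_{2}$ if and only if $8\mid(a+b)=p+q$ or $16\mid(a+b+c)=p+q-2$. These are exactly the conditions $p+q\equiv 0 \pmod{8}$ or $p+q\equiv 2\pmod{16}$. Combining the local conditions at $\infty$, $2$, $p$, $q$ and all other primes gives the lemma.

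The only step needing care is the $2$-adic one. There we must apply the rescaling in part~(3) correctly, including the change of $c$ to $2c=-2$, and check that the new triple lands in case (3-c). The odd-prime cases are routine applications of part~(2-e), provided the roles of $a$ and $b$ are swapped at $\ell=p$.
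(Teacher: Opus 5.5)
Your proposal is correct and matches the paper's proof essentially line for line. Both use $a=2p$, $b=2q$, $c=-1$ in Lemma~\ref{lem:Cohen}, apply part~(2-e) at $q$ and, after reordering, at $p$, and rescale to $(p,q,-2)$ at $\ell=2$, which lands in case~(3-c).
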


\begin{proof}
We apply Lemma~\ref{lem:Cohen} here with $a=2p$, $b=2q$ and $c=-1$. By
part~(1) of that lemma, since $a$, $b$ and $c$ do not all have the same
sign, there is always a nontrivial solution in $\bbR$.

Also, by part~(1) of Lemma~\ref{lem:Cohen}, there is a nontrivial solution in
$\bbQ_{\ell}$ for all $\ell \nmid (2pq)$.

For $q$, we have $v_{q}(a)=0 \leq v_{q}(b)=1$ and $\bv=(0,1,0)$. Part~(2-e) of
Lemma~\ref{lem:Cohen} applies. We put $\alpha=-a/c=2p$ and we find that there is
a local solution in $\bbQ_{q}$ if and only if $\left( \dfrac{2p}{q} \right)=1$.

To consider $p$, we must reorder the terms in our equation, putting $a=2q$, $b=2p$
and $c=-1$. So $v_{p}(a)=0 \leq v_{p}(b)=1$ and $\bv=(0,1,0)$ again, as with $q$.
From part~(2-e) of Lemma~\ref{lem:Cohen}, $\alpha=-a/c=2q$, so there is a local
solution in $\bbQ_{p}$ if and only if $\left( \dfrac{2q}{p} \right)=1$.

Lastly, we consider $\ell=2$. Here $\bw=(1,1,0)$, so by part~(3) of Lemma~\ref{lem:Cohen},
we replace $(a,b,c)$ with $(p,q,-2)$ and put $\bv=(0,0,1)$. This means we are
in part~(3-c) of Lemma~\ref{lem:Cohen}. Since $a+b=p+q$ and $a+b+c=p+q-2$,
there is a nontrivial solution in $\bbQ_{2}$ if and only if $8|(p+q)$ or
$16|(p+q-2)$.
\end{proof}

\subsection{Homogeneous spaces for $\widehat{E_{p,q}}$}
\label{subsect:hatE-hom-spaces}

Now we determine $S^{(\widehat{\phi})} \left( \widehat{E_{p,q}}/\bbQ \right)$.
As above in Subsection~\ref{subsect:E-hom-spaces}, here we have
\[
\bbQ \left( \widehat{S}, 2 \right) = \{ \pm 1, \pm 2, \pm p, \pm q, \pm 2p, \pm 2q, \pm pq, \pm 2pq \}.
\]

Since $\widehat{E_{p,q}}: y^{2}= x^{3} + 4pqx$, we know that $1, -pq \in
S^{(\widehat{\phi})} \left( \widehat{E_{p,q}}/\bbQ \right)$.

As with $S^{(\phi)} \left( E_{p,q}/\bbQ \right)$, some of the spaces $\widehat{C}_{d}$
are equivalent to others after changing $d$ by a square and swapping the variables
$u$ and $z$. So
$\widehat{C}_{-1}$ is the same as $\widehat{C}_{pq}$;
$\widehat{C}_{2}$ is the same as $\widehat{C}_{-2pq}$;
$\widehat{C}_{-2}$ is the same as $\widehat{C}_{2pq}$;
$\widehat{C}_{p}$ is the same as $\widehat{C}_{-q}$;
$\widehat{C}_{-p}$ is the same as $\widehat{C}_{q}$;
$\widehat{C}_{2p}$ is the same as $\widehat{C}_{-2q}$; and
$\widehat{C}_{-2p}$ is the same as $\widehat{C}_{2q}$.

Hence we are left with the seven values of $d$ in Table~\ref{table:oH-homogeneous spaces}.

\begin{center}
\begin{table}[h]
\begin{tabular}{|c|r|l|}\hline
\multicolumn{1}{|c|}{Label} & \multicolumn{1}{|c|}{$d$} & \multicolumn{1}{|c|}{Equation} \\ \hline
$\widehat{C}_{-1}$   &   $-1$  & $w^{2} =   -u^{4}+  pqz^{4}$ \\[2pt] \hline
$\widehat{C}_{2}$    &    $2$  & $w^{2} =   2u^{4}- 8pqz^{4}$ \\[2pt] \hline
$\widehat{C}_{-2}$   &   $-2$  & $w^{2} =  -2u^{4}+ 8pqz^{4}$ \\[2pt] \hline
$\widehat{C}_{p}$    &    $p$  & $w^{2} =   pu^{4}-   qz^{4}$ \\[2pt] \hline
$\widehat{C}_{-p}$   &   $-p$  & $w^{2} =  -pu^{4}+   qz^{4}$ \\[2pt] \hline
$\widehat{C}_{2p}$   &   $2p$  & $w^{2} =  2pu^{4}-  8qz^{4}$ \\[2pt] \hline
$\widehat{C}_{-2p}$  &  $-2p$  & $w^{2} = -2pu^{4}+  8qz^{4}$ \\[2pt] \hline
\end{tabular}
\caption{Homogeneous spaces for $\widehat{E_{p,q}}$}
\label{table:oH-homogeneous spaces}
\end{table}
\end{center}

\begin{remark-nonum}
For $\widehat{C}_{-1}$, $\widehat{C}_{p}$ and $\widehat{C}_{-p}$, we have made
the change of variables $z \mapsto 2z$ to remove a factor of $16$ from the
coefficient of $z^{4}$.
\end{remark-nonum}

\begin{lemma}
\label{lem:oT1}
The homogeneous space $\widehat{C}_{-1}: w^{2}=-u^{4}+pqz^{4}$ has nontrivial
local solutions in $\bbR$ and in $\bbQ_{\ell}$ for all primes $\ell$ if and only
if $p \equiv q \equiv 1 \pmod{4}$ and $pq \equiv 1,5,9 \pmod{16}$.
\end{lemma}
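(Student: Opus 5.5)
We follow the same pattern as the proofs of Lemmas~\ref{lem:T1}--\ref{lem:T3}, applying Lemma~\ref{lem:Cohen} to the equation written as $-u^{4}+pqz^{4}-w^{2}=0$, so that $a=-1$, $b=pq$ and $c=-1$. These satisfy the hypotheses: $a$ and $b$ are fourth-power-free, $c$ is squarefree and $\gcd(a,b,c)=1$. Since $a$, $b$ and $c$ do not all have the same sign, part~(1) gives a nontrivial solution in $\bbR$. Part~(1) also gives a nontrivial solution in $\bbQ_{\ell}$ for every $\ell \nmid 2pq$. So only $\ell=p$, $\ell=q$ and $\ell=2$ remain to be checked.

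For $\ell=p$ we have $v_{p}(a)=0 \leq v_{p}(b)=1$ and $v_{p}(c)=0$, so $\bv=(0,1,0)$ and part~(2-e) applies. Here $\alpha=-a/c=-1$, so there is a local solution in $\bbQ_{p}$ if and only if $-1$ is a square modulo $p$, that is, $p \equiv 1 \pmod{4}$. The same argument at $q$ gives the condition $q \equiv 1 \pmod{4}$. No reordering is needed, since $a$ already has the smaller valuation.

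For $\ell=2$, all three coefficients are odd, so $\bw=\bv=(0,0,0)$ with no replacement, and part~(3-b) applies. We then check the four divisibility conditions in turn:
\begin{itemize}
\item $8 \mid (a+c)=-2$ never holds;
\item $8 \mid (b+c)=pq-1$ means $pq \equiv 1 \pmod{8}$, that is, $pq \equiv 1,9 \pmod{16}$;
\item $16 \mid (a+b)=pq-1$ is already contained in the previous condition;
\item $16 \mid (a+b+4c)=pq-5$ means $pq \equiv 5 \pmod{16}$.
\end{itemize}
Hence there is a local solution in $\bbQ_{2}$ if and only if $pq \equiv 1,5,9 \pmod{16}$.

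Combining the three local conditions gives exactly the statement of the lemma. The conditions $p \equiv q \equiv 1 \pmod{4}$ force $pq \equiv 1 \pmod{4}$, so they are compatible with the $2$-adic condition and no further simplification is needed. The only step requiring care is the $2$-adic one: we must confirm that no rescaling from part~(3) is triggered and compute the four congruences correctly. Everything else is direct bookkeeping with Lemma~\ref{lem:Cohen}.
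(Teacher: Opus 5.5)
Your proposal is correct and follows the paper's proof essentially line for line. You make the same choice $a=-1$, $b=pq$, $c=-1$ in Lemma~\ref{lem:Cohen}, use part~(2-e) with $\alpha=-1$ at $p$ and $q$, and use part~(3-b) at $\ell=2$ to obtain exactly $pq \equiv 1,5,9 \pmod{16}$.
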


\begin{proof}
We apply Lemma~\ref{lem:Cohen} with $a=-1$, $b=pq$ and $c=-1$. By
part~(1) of that lemma, since $a$, $b$ and $c$ do not all have the same
sign, there is always a nontrivial solution in $\bbR$.

Also, by part~(1) of Lemma~\ref{lem:Cohen}, there is a nontrivial solution in
$\bbQ_{\ell}$ for all $\ell \nmid (2pq)$.

If $\ell$ is either $p$ or $q$, then $v_{\ell}(a)=0 \leq v_{\ell}(b)=1$ and $\bv=(0,1,0)$
in the notation of Lemma~\ref{lem:Cohen}. So part~(2-e) of Lemma~\ref{lem:Cohen}
applies. $\alpha=-a/c=-1$, which is a square in $\bbF_{\ell}^{*}$ if and only if
$\ell \equiv 1 \pmod{4}$.

Lastly, we consider $\ell=2$. Here $\bv=\bw=(0,0,0)$. Part~(3-b) of
Lemma~\ref{lem:Cohen} applies. There is a nontrivial solution in
$\bbQ_{2}$ if and only if $8|(-2)$ (which never holds), $8|(pq-1)$,
$16|(pq-1)$ or $16|(pq-5)$. This condition holds if
and only if $pq \equiv 1,5,9 \pmod{16}$.
\end{proof}

\begin{lemma}
\label{lem:oT2}
The homogeneous spaces $\widehat{C}_{2}$, $\widehat{C}_{-2}$, $\widehat{C}_{2p}$
and $\widehat{C}_{-2p}$ have no nontrivial solutions in $\bbQ_{2}$.
\end{lemma}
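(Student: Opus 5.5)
The plan is to apply Lemma~\ref{lem:Cohen}, part~(3), directly to each of the four equations, writing each in the form $ax^{4}+by^{4}+cz^{2}=0$. For $\widehat{C}_{2}: w^{2}=2u^{4}-8pqz^{4}$ we take $a=2$, $b=-8pq$, $c=-1$.

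For $\widehat{C}_{-2}$ the choice is $a=-2$, $b=8pq$, $c=-1$. For $\widehat{C}_{2p}$ it is $a=2p$, $b=-8q$, $c=-1$, and for $\widehat{C}_{-2p}$ it is $a=-2p$, $b=8q$, $c=-1$.

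First I check the hypotheses of Lemma~\ref{lem:Cohen}. Since $p,q$ are distinct odd primes, $a$ and $b$ are fourth-power-free (the $2$-adic valuations are $1$ and $3$, and the odd parts are squarefree), $c=-1$ is squarefree, and $\gcd(a,b,c)=1$. The ordering already satisfies $v_{2}(a)=1\le v_{2}(b)=3$, so $\bw=(1,3,0)$ in all four cases.

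The triple $(1,3,0)$ is not among the exceptional triples listed in part~(3) for which $(a,b,c)$ is replaced. Hence $(a,b,c)$ is unchanged and $\bv=(1,3,0)$. Part~(3-a) of Lemma~\ref{lem:Cohen} then says that none of these equations has a nontrivial solution in $\bbQ_{2}$, which is the claim.

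As a sanity check independent of Cohen's tables, I would also give the elementary valuation argument. Suppose $(u,w,z)\neq(0,0,0)$ is a $2$-adic solution of $w^{2}=\pm\left( 2ku^{4}-8mz^{4}\right)$ with $k,m$ odd. If $u$ and $z$ are both nonzero, then $v_{2}(2ku^{4})\equiv 1 \pmod{4}$ and $v_{2}(8mz^{4})\equiv 3 \pmod{4}$. These valuations are distinct, so the right-hand side has valuation equal to the smaller of the two, which is odd. This contradicts $v_{2}(w^{2})$ being even.

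If exactly one of $u,z$ is zero, the right-hand side is nonzero with odd valuation, giving the same contradiction. If $u=z=0$, then $w=0$ and the solution is trivial.

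There is no real obstacle here. The only point needing care is confirming that $(1,3,0)$ triggers no substitution in part~(3), so that case~(3-a) applies as stated.
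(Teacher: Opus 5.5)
Your proof is correct and matches the paper's: both observe that $\bw=\bv=(1,3,0)$, which triggers no substitution in part~(3), and then invoke part~(3-a) of Lemma~\ref{lem:Cohen}. Your supplementary valuation argument is also valid. The two terms on the right have $2$-adic valuations $\equiv 1$ and $\equiv 3 \pmod{4}$, so the right side has odd valuation. This makes the lemma self-contained without relying on Cohen's tables.
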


\begin{proof}
For each of these homogeneous spaces, after reordering the $u$ and $z$ coefficients,
if necessary, $\bv=\bw=(1,3,0)$. By part~(3-a) of Lemma~\ref{lem:Cohen},
there are no nontrivial solutions in $\bbQ_{2}$.
\end{proof}

\begin{lemma}
\label{lem:oT3}
The homogeneous space $\widehat{C}_{p}: w^{2}=pu^{4}-qz^{4}$ has nontrivial
local solutions in $\bbR$ and in $\bbQ_{\ell}$ for all primes $\ell$ if and only
if $\left( \dfrac{-q}{p} \right)=1$ and $\left( \dfrac{p}{q} \right)=1$ both
hold and at least one of $p \equiv 1 \pmod{8}$, $q \equiv 7 \pmod{8}$,
$p-q \equiv 0 \pmod{16}$ or $p-q \equiv 4 \pmod{16}$ holds.
\end{lemma}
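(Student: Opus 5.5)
We will follow the same pattern as the proofs of Lemmas~\ref{lem:T1}--\ref{lem:oT1} and apply Lemma~\ref{lem:Cohen} to $\widehat{C}_{p}$ written as $pu^{4}-qz^{4}-w^{2}=0$. So we take $a=p$, $b=-q$ and $c=-1$. These are nonzero, $a$ and $b$ are fourth-power-free, $c$ is squarefree and $\gcd(a,b,c)=1$, so the lemma applies.

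\emph{Real and unramified places.} By part~(1), since $a>0$ while $b,c<0$, the coefficients do not all have the same sign, so there is always a nontrivial real solution. Again by part~(1), there is a nontrivial solution in $\bbQ_{\ell}$ for every $\ell \nmid 2pq$.

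\emph{The primes $p$ and $q$.} At $\ell=q$ we have $v_{q}(a)=0\leq v_{q}(b)=1$, so $\bv=(0,1,0)$ and part~(2-e) applies with $\alpha=-a/c=p$. Hence there is a local solution in $\bbQ_{q}$ if and only if $\left( \frac{p}{q} \right)=1$. At $\ell=p$ we reorder so that $a=-q$ and $b=p$, keeping $c=-1$. Again $\bv=(0,1,0)$, and now $\alpha=-a/c=-q$, so there is a local solution in $\bbQ_{p}$ if and only if $\left( \frac{-q}{p} \right)=1$.

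\emph{The prime $2$.} With the original ordering, $\bw=\bv=(0,0,0)$, so no replacement is made and part~(3-b) applies. There is a nontrivial $2$-adic solution if and only if at least one of the following holds:
\begin{itemize}
\item $8\mid(a+c)=p-1$, which is $p\equiv 1 \pmod{8}$;
\item $8\mid(b+c)=-q-1$, which is $q\equiv 7 \pmod{8}$;
\item $16\mid(a+b)=p-q$;
\item $16\mid(a+b+4c)=p-q-4$, which is $p-q\equiv 4 \pmod{16}$.
\end{itemize}
Combining the local conditions at $p$, $q$ and $2$ gives exactly the statement.

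There is no real obstacle here. The only places needing care are the sign of $\alpha$ after reordering at $\ell=p$, which is what produces $-q$ rather than $q$, and the correct translation of the four divisibility conditions at $2$ into congruences on $p$ and $q$.
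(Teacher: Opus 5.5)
Your proposal is correct and matches the paper's proof essentially verbatim. Both apply Lemma~\ref{lem:Cohen} with $(a,b,c)=(p,-q,-1)$, use part~(2-e) at $q$ with $\alpha=p$ and at $p$ (after reordering) with $\alpha=-q$, and use part~(3-b) at $2$ to obtain the four divisibility conditions.
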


\begin{proof}
We apply Lemma~\ref{lem:Cohen} with $a=p$, $b=-q$ and $c=-1$. By
part~(1) of that lemma, since $a$, $b$ and $c$ do not all have the same
sign, there is always a nontrivial solution in $\bbR$.

Also, by part~(1) of Lemma~\ref{lem:Cohen}, there is a nontrivial solution in
$\bbQ_{\ell}$ for all $\ell \nmid (2pq)$.

If $\ell=q$, $v_{q}(a)=0 \leq v_{q}(b)=1$ and $\bv=(0,1,0)$. Part~(2-e) of
Lemma~\ref{lem:Cohen} applies. We put $\alpha=-a/c=p$ and we find that there is a local
solution in $\bbQ_{q}$ if and only if $\left( \dfrac{p}{q} \right)=1$.

To consider $p$, we must reorder the terms in our equation, putting $a=-q$, $b=p$
and $c=-1$. So $v_{p}(a)=0 \leq v_{p}(b)=1$ and $\bv=(0,1,0)$. Here $\alpha=-q$,
so there is a local solution in $\bbQ_{p}$ if and only if
$\left( \dfrac{-q}{p} \right)=1$.

Lastly, we consider $\ell=2$. Here $\bv=\bw=(0,0,0)$.
Part~(3-b) of Lemma~\ref{lem:Cohen} applies. There is a nontrivial solution in
$\bbQ_{2}$ if and only if $8|(p-1)$, $8|(-q-1)$, $16|(p-q)$ or $16|(p-q-4)$.
\end{proof}

\begin{lemma}
\label{lem:oT4}
The homogeneous space $\widehat{C}_{-p}: w^{2}=-pu^{4}+qz^{4}$ has nontrivial
local solutions in $\bbR$ and in $\bbQ_{\ell}$ for all primes $\ell$ if and only
if $\left( \dfrac{q}{p} \right)=1$ and $\left( \dfrac{-p}{q} \right)=1$ both
hold and at least one of $p \equiv 7 \pmod{8}$, $q \equiv 1 \pmod{8}$,
$q-p \equiv 0 \pmod{16}$ or $q-p \equiv 4 \pmod{16}$ holds.
\end{lemma}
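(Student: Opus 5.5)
The approach is the same as for Lemma~\ref{lem:oT3}: apply Lemma~\ref{lem:Cohen} to $\widehat{C}_{-p}: w^{2}=-pu^{4}+qz^{4}$, written as $ax^{4}+by^{4}+cz^{2}=0$, and work through the places $\infty$, $\ell \nmid 2pq$, $q$, $p$ and $2$ in turn. In fact the statement is exactly Lemma~\ref{lem:oT3} with the roles of $p$ and $q$ exchanged and the $u$, $z$ variables swapped. One could simply invoke that symmetry, but we would write out the short direct check.

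Take $a=q$, $b=-p$, $c=-1$, keeping the prime coefficient whose valuation is smaller first at each place. These are fourth-power-free, $c$ is squarefree and $\gcd(a,b,c)=1$.

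\emph{Real and unramified places.} The signs of $a$, $b$, $c$ are not all equal, so by part~(1) there is a real solution. Part~(1) also gives solvability at every $\ell \nmid 2pq$.

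\emph{The place $p$.} With $a=q$ and $b=-p$ we have $\bv=(0,1,0)$, so part~(2-e) applies. Here $\alpha=-a/c=q$, and there is a solution in $\bbQ_{p}$ if and only if $\left( \dfrac{q}{p} \right)=1$.

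\emph{The place $q$.} We reorder to $a=-p$, $b=q$, $c=-1$, which again gives $\bv=(0,1,0)$. Now $\alpha=-a/c=-p$, so there is a solution in $\bbQ_{q}$ if and only if $\left( \dfrac{-p}{q} \right)=1$.

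\emph{The place $2$.} Here $\bw=\bv=(0,0,0)$, so no rescaling is needed and part~(3-b) applies. With $(a,b,c)=(q,-p,-1)$ the four conditions become:
\begin{itemize}
\item $8 \mid (q-1)$, i.e.\ $q\equiv 1 \pmod 8$;
\item $8 \mid (-p-1)$, i.e.\ $p \equiv 7 \pmod 8$;
\item $16 \mid (q-p)$;
\item $16 \mid (q-p-4)$, i.e.\ $q-p\equiv 4 \pmod{16}$.
\end{itemize}
This is exactly the disjunction in the statement.

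The only real care needed is at $\ell=2$: we must confirm that the original ordering already has $v_{2}(a)\le v_{2}(b)$, which it does since both valuations are $0$. We must also track the signs in $a+c$, $b+c$, $a+b$ and $a+b+4c$ correctly. Note that $a+b+4c=q-p-4$, which is where the ``$4 \pmod{16}$'' condition comes from. Combining the local conditions at $p$, $q$ and $2$ then gives the lemma.
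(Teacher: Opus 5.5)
Your proposal is correct and is essentially the paper's argument. The paper proves this lemma in one line by citing Lemma~\ref{lem:oT3} with $p$ and $q$ interchanged and the variables $u$ and $z$ swapped, which is exactly the symmetry you note. Your direct run through Lemma~\ref{lem:Cohen} reproduces the proof of Lemma~\ref{lem:oT3} under that swap, and each local condition comes out right: $\left( \frac{q}{p} \right)=1$ at $p$, $\left( \frac{-p}{q} \right)=1$ at $q$, and the four congruences at $2$.
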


\begin{proof}
This follows from Lemma~\ref{lem:oT3} after interchanging $p$ and $q$ and
swapping the variables $u$ and $z$.
\end{proof}

\section{Proof of Theorem~\ref{thm:rk0}}

We proceed by considering the conditions we obtained for the homogeneous spaces
in the previous section.

$\bullet$ Consider first the case when $p \equiv q \equiv 1 \pmod{4}$.

We consider the conditions on the homogeneous spaces in order of ease of application.

For $C_{p}$, the conditions $\left( \dfrac{p}{q} \right)=\left( \dfrac{q}{p} \right)=1$
in Lemma~\ref{lem:T2} are equivalent here due to quadratic reciprocity.
So we require that $\left( \dfrac{p}{q} \right)=-1$ in order that $C_{p}$ is not
everywhere locally solvable.

For $\widehat{C}_{-1}$, we require $pq \equiv 13 \pmod{16}$ so that $\widehat{C}_{-1}$
is not everywhere locally solvable by Lemma~\ref{lem:oT1}.

For $C_{2}$, $pq \equiv 7 \pmod{8}$ is never possible since $p \equiv q \equiv 1 \pmod{4}$.
So, by Lemma~\ref{lem:T1}, we want $pq \not\equiv 1 \pmod{16}$ or
$p \equiv 5 \pmod{8}$ or $q \equiv 5 \pmod{8}$ in order for
$C_{2}$ to not be everywhere locally solvable. The first of these conditions
holds by the condition we obtained for $\widehat{C}_{-1}$.

For $C_{2p}$, by Lemma~\ref{lem:T3}, we want one of $\left( \dfrac{2p}{q} \right)=-1$
or $\left( \dfrac{2q}{p} \right)=-1$ to hold or else neither $p+q \equiv 0 \pmod{8}$
nor $p+q \equiv 2 \pmod{16}$ holds.

From our consideration of $C_{p}$, we have $\left( \dfrac{p}{q} \right)=\left( \dfrac{q}{p} \right)=-1$.
So $\left( \dfrac{2q}{p} \right)=-\left( \dfrac{2}{p} \right)$ and
$\left( \dfrac{2p}{q} \right)=-\left( \dfrac{2}{q} \right)$. Hence
$\left( \dfrac{2p}{q} \right)=-1$ implies that $\left( \dfrac{2}{q} \right)=1$,
so $q \equiv 1 \pmod{8}$,
and similarly $\left( \dfrac{2q}{p} \right)=-1$ implies that $p \equiv 1 \pmod{8}$.

From $p \equiv q \equiv 1 \pmod{4}$ and $pq \equiv 13 \pmod{16}$, we have
$(p,q) \equiv (1,13), (5,9), (9,5), (13,1) \pmod{16}$. For each possibility,
one of $p$ or $q$ is congruent to $1 \pmod{8}$. Hence the corresponding value
of $\left( \dfrac{2p}{q} \right)$ or $\left( \dfrac{2q}{p} \right)$ is $-1$.
Therefore, under the conditions we established from considering $C_{p}$ and
$\widehat{C}_{-1}$, $C_{2p}$ is never everywhere locally solvable.

For $\widehat{C}_{p}$ to be everywhere locally solvable, we need
$\left( \dfrac{p}{q} \right)=1$ to hold by Lemma~\ref{lem:oT3}. But from our
consideration of $C_{p}$, this never holds and hence $\widehat{C}_{p}$ is never
everywhere locally solvable.

The same argument shows that $\widehat{C}_{-p}$ is also never everywhere locally
solvable.

In this way, we see that for $p \equiv q \equiv 1 \pmod{4}$, condition~(i) in
Theorem~\ref{thm:rk0} holds if and only if
$S^{(\phi)} \left( E_{p,q}/\bbQ \right) = \{1, pq \}$
and $S^{(\widehat{\phi})} \left( \widehat{E_{p,q}}/\bbQ \right) = \{1, -pq \}$
both hold.

\vspace*{2.0mm}

\noindent
$\bullet$ Suppose that $p \equiv 1 \pmod{4}$ and $q \equiv 3 \pmod{4}$.

$C_{2}$ is not everywhere locally solvable if and only if $p \equiv 5 \pmod{8}$
or $q \equiv 3 \pmod{8}$ or $pq \not\equiv 7 \pmod{8}$, by Lemma~\ref{lem:T1}.
Notice that $pq \equiv 1 \pmod{16}$ can never hold here.

For $C_{p}$, since $p \equiv 1 \pmod{4}$, we have $\left( \dfrac{p}{q} \right)=\left( \dfrac{q}{p} \right)$
by quadratic reciprocity. So we require $\left( \dfrac{p}{q} \right)=-1$ by
Lemma~\ref{lem:T2} in order that $C_{p}$ is not everywhere locally solvable.

For $C_{2p}$, by Lemma~\ref{lem:T3}, we need $\left( \dfrac{2p}{q} \right)=-1$
or $\left( \dfrac{2q}{p} \right)=-1$ or $p+q \not\equiv 0,2,8 \pmod{16}$ so
that $C_{2p}$ is not everywhere locally solvable.

We now combine these conditions with those obtained from $C_{2}$ and $C_{p}$.

Suppose that $p \equiv 5 \pmod{8}$ (one of the possibilities from $C_{2}$). By
quadratic reciprocity and Lemma~\ref{lem:T2}, we
have $\left( \dfrac{p}{q} \right)=\left( \dfrac{q}{p} \right)=-1$. Hence
$\left( \dfrac{2q}{p} \right)=-\left( \dfrac{2}{p} \right)=1$ always holds.
We can eliminate $q \equiv 3 \pmod{8}$, since $p+q \equiv 0 \pmod{8}$ and
$\left( \dfrac{2p}{q} \right)=-\left( \dfrac{2}{q} \right)=1$, so by
Lemma~\ref{lem:T3}, $C_{2p}$ is everywhere locally solvable.
This leaves only $q \equiv 7 \pmod{8}$, so by Lemma~\ref{lem:T3} again, we
have $\left( \dfrac{2p}{q} \right)=-\left( \dfrac{2}{q} \right)=-1$. Therefore,
when $p \equiv 5 \pmod{8}$, $C_{2p}$ is not everywhere locally solvable if and
only if $q \equiv 7 \pmod{8}$. Equivalently, $pq \equiv 3 \pmod{8}$.

Next, suppose that $q \equiv 3 \pmod{8}$ (another possibility from $C_{2}$). By
the same reasoning, we find that
$C_{2p}$ is not everywhere locally solvable if and only if $p \equiv 1 \pmod{8}$.
Here again, we have $pq \equiv 3 \pmod{8}$.

Now suppose that $pq \not\equiv 7 \pmod{8}$ (the remaining condition arising from $C_{2}$).
The possibilities are $(p,q) \equiv
(1,3), (5,7) \pmod{8}$. For the first pair, $\left( \dfrac{2}{p} \right)=1$, so
$\left( \dfrac{2q}{p} \right)=-\left( \dfrac{2}{p} \right)=-1$ (recall that
$\left( \dfrac{q}{p} \right)=-1$ by our consideration of $C_{p}$). Hence
$C_{2p}$ is not everywhere locally solvable. Similarly, for the second pair,
$C_{2p}$ is not everywhere locally solvable when $pq \not\equiv 7 \pmod{8}$.

$\widehat{C}_{-1}$ is never everywhere locally solvable since the condition
$p \equiv q \equiv 1 \pmod{4}$ in Lemma~\ref{lem:oT1} does not hold here.

For $\widehat{C}_{p}$, $\left( \dfrac{p}{q} \right) \neq 1$ by our consideration
of $C_{p}$. So $\widehat{C}_{p}$ is not everywhere locally solvable by
Lemma~\ref{lem:oT3}.

Similarly, $\widehat{C}_{-p}$ is also not everywhere locally solvable by
Lemma~\ref{lem:oT4}, since $\left( \dfrac{q}{p} \right)
=\left( \dfrac{p}{q} \right)=-1$.

\vspace*{2.0mm}

\noindent
$\bullet$ Suppose that $p \equiv 3 \pmod{4}$ and $q \equiv 1 \pmod{4}$.

$C_{2}$ is not everywhere locally solvable if and only if $p \equiv 3 \pmod{8}$
or $q \equiv 5 \pmod{8}$ or $pq \equiv 3 \pmod{8}$, by
Lemma~\ref{lem:T1}.

$C_{p}$ is not everywhere locally solvable if and only if
$\left( \dfrac{p}{q} \right)=\left( \dfrac{q}{p} \right)=-1$ (applying quadratic
reciprocity), by Lemma~\ref{lem:T2}.

$C_{2p}$ is not everywhere locally solvable if and only if
$\left( \dfrac{2p}{q} \right)=-\left( \dfrac{2}{q} \right)=-1$,
or $\left( \dfrac{2q}{p} \right)=-\left( \dfrac{2}{p} \right)=-1$
or $p+q \not\equiv 0,2,8 \pmod{16}$. Here we use
$\left( \dfrac{p}{q} \right)=\left( \dfrac{q}{p} \right)=-1$ from $C_{p}$.

If the condition $p \equiv 3 \pmod{8}$ holds, then $p+q \not\equiv 0 \pmod{8}$ holds
if and only if $q \equiv 1 \pmod{8}$, while $p+q \equiv 2 \pmod{16}$ never
holds (since $q \equiv 1 \pmod{4}$). Hence the condition $pq \equiv 3 \pmod{8}$
holds.

Similarly, if $q \equiv 5 \pmod{8}$, then we must have $p \equiv 7 \pmod{8}$
and again $pq \equiv 3 \pmod{8}$.

Lastly, suppose that $pq \equiv 3 \pmod{8}$. This is only possible when
$(p,q) \equiv (3,1), (7,5) \pmod{8}$. In the first case, $\left( \dfrac{2p}{q} \right)
=-\left( \dfrac{2}{q} \right)=-1$, so $C_{2p}$ is not everywhere locally solvable.
In the second case, $\left( \dfrac{2q}{p} \right)=-1$, so we also find that
$C_{2p}$ is not everywhere locally solvable.

$\widehat{C}_{-1}$ is never everywhere locally solvable since $p \equiv 3 \pmod{4}$.

$\widehat{C}_{p}$ and $\widehat{C}_{-p}$ are both never everywhere locally solvable
due to the condition $\left( \dfrac{p}{q} \right)=\left( \dfrac{q}{p} \right)=-1$
obtained from considering $C_{p}$ and applying Lemmas~\ref{lem:oT3} and \ref{lem:oT4}.

\vspace*{2.0mm}

\noindent
$\bullet$ Suppose that $p \equiv q \equiv 3 \pmod{4}$.

$C_{2}$ is not everywhere locally solvable if and only if $p \equiv 3 \pmod{8}$
or $q \equiv 3 \pmod{8}$ or $pq \not\equiv 1 \pmod{16}$, by Lemma~\ref{lem:T1}.
Since $p \equiv q \equiv 3 \pmod{4}$, we cannot have $pq \equiv 7 \pmod{8}$,
which is one of the conditions in Lemma~\ref{lem:T1}.

$C_{p}$ is never everywhere locally solvable since neither $p \equiv 1 \pmod{4}$
nor $q \equiv 1 \pmod{4}$ holds.

$C_{2p}$ is not everywhere locally solvable if and only if
$\left( \dfrac{2p}{q} \right)=-1$ or $\left( \dfrac{2q}{p} \right)=-1$ or
$p+q \not\equiv 0,2,8 \pmod{16}$.

$\widehat{C}_{-1}$ is never everywhere locally solvable since $p \equiv q \equiv 3 \pmod{4}$.

$\widehat{C}_{p}$ is not everywhere locally solvable if and only if
$\left( \dfrac{-q}{p} \right)=-\left( \dfrac{q}{p} \right)=-1$ or
$\left( \dfrac{p}{q} \right)=-1$ or
all of $p \not\equiv 1 \pmod{8}$, $q \not\equiv 7 \pmod{8}$ and
$p-q \not\equiv 0,4 \pmod{16}$ hold.

$\widehat{C}_{-p}$ is not everywhere locally solvable if and only if
$\left( \dfrac{q}{p} \right)=-1$ or
$\left( \dfrac{-p}{q} \right)=-\left( \dfrac{p}{q} \right)=-1$ or
all of $p \not\equiv 7 \pmod{8}$, $q \not\equiv 1 \pmod{8}$ and
$q-p \not\equiv 0,4 \pmod{16}$ hold.

We now combine this information to prove the theorem in this case.

First suppose that $\left( \dfrac{p}{q} \right)=1$. Since $p \equiv q \equiv 3 \pmod{4}$,
we also have $\left( \dfrac{q}{p} \right)=-1$ and $\left( \dfrac{-q}{p} \right)=1$.
So in this case, $\widehat{C}_{-p}$ is not everywhere locally solvable.

Also $\widehat{C}_{p}$ is not everywhere locally solvable if and only
if $q \equiv 3 \pmod{8}$ and $p-q \not\equiv 0,4 \pmod{16}$ (i.e., when
$p-q \equiv 8,12 \pmod{16}$). This is condition~(iii) in the theorem.

Since $q \equiv 3 \pmod{8}$, we have seen that $C_{2}$ is not everywhere locally solvable.

Also, from $q \equiv 3 \pmod{8}$, we have $\left( \frac{2p}{q} \right)
=\left( \frac{2}{q} \right)\left( \frac{p}{q} \right)=-1$, so $C_{2p}$ is also
not everywhere locally solvable.

Similarly, if $\left( \dfrac{p}{q} \right)=-1$, then we can use Lemma~\ref{lem:oT4}
to show that $\widehat{C}_{-p}$ is not everywhere locally solvable if and only
if condition~(iv) in the theorem holds.

\section{Proof of Theorem~\ref{thm:rk0-sha1}}
\label{sect:thm-rk0-sha1}

That the conditions in Theorem~\ref{thm:rk0} imply the statement about the rank
of the Mordell-Weil group and the $2$-torsion subgroup of the Tate-Shafarevich
group follows immediately from Theorem~\ref{thm:rk0} and Proposition~\ref{prop:2.1}
since $\dim_{2} S^{(\phi)}\left( E_{p,q}/\bbQ \right)
=\dim_{2} S^{(\widehat{\phi})}\left( \widehat{E_{p,q}}/\bbQ \right)=1$ and
$\dim_{2} \cC_{S} \geq 0$.

For the other direction, we know that $\widehat{E_{p,q}}(\bbQ)[2] \cong \bbZ/2\bbZ$
from Proposition~X.6.1(a) of \cite{S09}. From this along with parts~(b) and (c)
of Lemma~\ref{lem:Cphi-2}, we see that $\dim_{2} \cC_{S}=0$ when
$\dim_{2} S^{(\widehat{\phi})}\left( \widehat{E_{p,q}}/\bbQ \right) \leq 2$.
Hence from Proposition~\ref{prop:2.1}, we cannot have
$\rank E(\bbQ)+\dim_{2} \Sha (E/\bbQ)[2]=0$ in this case unless
$\dim_{2} S^{(\phi)}\left( E_{p,q}/\bbQ \right)
=\dim_{2} S^{(\widehat{\phi})}\left( \widehat{E_{p,q}}/\bbQ \right)=1$.

The other possibility is that
$\dim_{2} S^{(\widehat{\phi})}\left( \widehat{E_{p,q}}/\bbQ \right)=3$.
The reason is that Lemma~\ref{lem:oT2} removes all square
classes with a factor of $2$, so $S^{(\widehat{\phi})}\left( \widehat{E_{p,q}}/\bbQ \right)$
is contained in the subgroup generated by $-1$, $p$ and $q$, namely
$\{ 1, -1, p, -p, q, -q, pq, -pq \}$.

In this case, $-1, p \in S^{(\widehat{\phi})}\left( \widehat{E_{p,q}}/\bbQ \right)$,
so by Lemmas~\ref{lem:oT1} and \ref{lem:oT3}, we have $p \equiv q \equiv 1 \pmod{4}$
and $\left( \dfrac{p}{q} \right)=\left( \dfrac{q}{p} \right)=1$. By Lemma~\ref{lem:T2},
$p,q \in S^{(\phi)} \left( E_{p,q}/\bbQ \right)$, so $\dim_{2} S^{(\phi)} \left( E_{p,q}/\bbQ \right)
\geq 2$. By Lemma~\ref{lem:Cphi-1} and since
$\dim_{2} \cC_{S} \leq \dim_{2} S^{(\widehat{\phi})}\left( \widehat{E_{p,q}}/\bbQ \right)=3$,
we also have $\dim_{2} \cC_{S} \in \{ 0,2 \}$.
Combining these facts with Proposition~\ref{prop:2.1}, we find that
we cannot have
$\rank E(\bbQ)+\dim_{2} \Sha (E/\bbQ)[2]=0$ in this case.

So if $\rank E(\bbQ)+\dim_{2} \Sha (E/\bbQ)[2]=0$, then we must have
$\dim_{2} S^{(\phi)}\left( E_{p,q}/\bbQ \right)
=\dim_{2} S^{(\widehat{\phi})}\left( \widehat{E_{p,q}}/\bbQ \right)=1$. By
Theorem~\ref{thm:rk0}, one of the conditions in Theorem~\ref{thm:rk0} must hold.
This shows that the converse in Theorem~\ref{thm:rk0-sha1} holds, completing its
proof.

\section{Proof of Corollary~\ref{cor:rk0-Qi}}
\label{sect:cor-rk0-Qi}

We start with a result about the rank of quadratic twists of an elliptic curve $E$.

If $\bbK = \bbQ \left( \sqrt{m} \right)$, where $m$ is a squarefree integer,
then the rank of any elliptic curve $E$ over $\bbK$ is the sum of the ranks of $E$
and its quadratic twist $E^{(m)}$ by $m$ over $\bbQ$. This can be seen from the
following result (see \cite[Exercise~10.22(c)(iv)]{S09}).

\begin{proposition}
\label{prop:2.2}
Let $\bbK = \bbQ \left( \sqrt{m} \right)$ be a quadratic field, where $m \neq 1$
is a squarefree integer. Let $E: y^{2} = x^{3} + ax^{2} + bx +c$ be an elliptic
curve over $\bbQ$ and $E^{(m)}: y^{2} = x^{3} + max^{2} + m^{2} bx + m^{3}c$ be
the quadratic twist of $E$ by $m$. Then
\[
\rank E(\bbK) = \rank E(\bbQ) + \rank E^{(m)}(\bbQ).
\]
\end{proposition}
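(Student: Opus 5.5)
We prove Proposition~\ref{prop:2.2} by realising $E^{(m)}$ as a ``piece'' of $E$ over $\bbK$ and using the action of $\mathrm{Gal}(\bbK/\bbQ)=\{1,\sigma\}$. Tensoring with $\bbQ$ makes all the finite torsion groups disappear, so it suffices to show
\[
E(\bbK)\otimes\bbQ \cong \bigl(E(\bbQ)\otimes\bbQ\bigr)\oplus\bigl(E^{(m)}(\bbQ)\otimes\bbQ\bigr)
\]
as $\bbQ$-vector spaces; the Mordell--Weil theorem then ensures these dimensions are the ranks.

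First, write down the isomorphism $\psi: E^{(m)}\to E$ defined over $\bbK$, namely $\psi(x,y)=\left(x/m,\, y/(m\sqrt{m})\right)$. One checks directly from the equations that $\psi$ maps points of $E^{(m)}$ to points of $E$: dividing $y^{2}=x^{3}+max^{2}+m^{2}bx+m^{3}c$ by $m^{3}$ gives the equation of $E$ in the new coordinates. For $P\in E^{(m)}(\bbQ)$, the point $\psi(P)$ has rational $x$-coordinate and $y$-coordinate in $\sqrt{m}\,\bbQ$, so $\sigma(\psi(P))=-\psi(P)$. Conversely, any $Q\in E(\bbK)$ with $\sigma(Q)=-Q$ has $x(Q)\in\bbQ$ and $\sigma(y(Q))=-y(Q)$ (if $Q\neq O$), so $y(Q)\in\sqrt{m}\,\bbQ$ and $Q=\psi(P)$ with $P\in E^{(m)}(\bbQ)$. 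Thus $\psi$ identifies $E^{(m)}(\bbQ)$ with the anti-invariant subgroup $E(\bbK)^{-}=\{Q:\sigma Q=-Q\}$, while $E(\bbQ)=E(\bbK)^{+}=E(\bbK)^{\sigma}$.

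Next, decompose $E(\bbK)$. For any $Q\in E(\bbK)$ we have $2Q=(Q+\sigma Q)+(Q-\sigma Q)$, where $Q+\sigma Q\in E(\bbK)^{+}$ and $Q-\sigma Q\in E(\bbK)^{-}$. Hence $2E(\bbK)\subseteq E(\bbK)^{+}+E(\bbK)^{-}$. Also $E(\bbK)^{+}\cap E(\bbK)^{-}$ consists of points with $2Q=O$, so it is finite. Therefore the map $E(\bbK)^{+}\oplus E(\bbK)^{-}\to E(\bbK)$ has finite kernel and a cokernel killed by $2$, which is finite since $E(\bbK)$ is finitely generated. After tensoring with $\bbQ$ this map becomes an isomorphism, and taking dimensions gives $\rank E(\bbK)=\rank E(\bbQ)+\rank E^{(m)}(\bbQ)$.

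The only step needing real care is the identification $E(\bbK)^{-}=\psi\bigl(E^{(m)}(\bbQ)\bigr)$. In particular, one must handle $O$ and the $2$-torsion points with $y=0$, and verify that $\psi$ commutes with $\sigma$ up to sign, i.e.\ $\sigma\circ\psi=[-1]\circ\psi$ on $E^{(m)}(\bbQ)$. This follows because $\sigma(\sqrt{m})=-\sqrt{m}$ and negation on $E$ is $(x,y)\mapsto(x,-y)$. Once this is in place, everything else is formal.
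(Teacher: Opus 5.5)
Your proof is correct. The paper gives no proof of its own and only cites Silverman's Exercise~10.22(c)(iv); your argument is the standard one behind that exercise. You split $E(\bbK)$, up to $2$-torsion, into its $\sigma$-invariant part $E(\bbQ)$ and its $\sigma$-anti-invariant part, and use $\psi$ to identify the latter with $E^{(m)}(\bbQ)$.
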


\begin{corollary}
\label{cor:rank-i}
Let $\bbK = \bbQ(i)$ and $E_{D}: y^{2}=x^{3}+Dx$ for
nonzero $D \in \bbZ$. Then
\begin{equation}
\label{eq:rank-minus1-twist}
\rank E_{D}(\bbK) = 2\rank E_{D}(\bbQ).
\end{equation}
\end{corollary}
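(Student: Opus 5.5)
We will deduce Corollary~\ref{cor:rank-i} from Proposition~\ref{prop:2.2} applied with $m=-1$, so that $\bbK=\bbQ(\sqrt{-1})=\bbQ(i)$. This gives
\[
\rank E_{D}(\bbK)=\rank E_{D}(\bbQ)+\rank E_{D}^{(-1)}(\bbQ).
\]
It therefore suffices to show that the twist $E_{D}^{(-1)}$ is isomorphic over $\bbQ$ to $E_{D}$ itself. Then $\rank E_{D}^{(-1)}(\bbQ)=\rank E_{D}(\bbQ)$ and \eqref{eq:rank-minus1-twist} follows at once.

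To compute the twist, write $E_{D}$ in the form of Proposition~\ref{prop:2.2} with $a=0$, $b=D$, $c=0$. Then
\[
E_{D}^{(-1)}: y^{2}=x^{3}+(-1)^{2}Dx=x^{3}+Dx,
\]
so $E_{D}^{(-1)}$ is literally the same Weierstrass equation as $E_{D}$. No change of variables is even needed.

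If one prefers the description of the twist via $-y^{2}=x^{3}+Dx$, the substitution $(x,y)\mapsto(-x,y)$ turns $-y^{2}=x^{3}+Dx$ into $y^{2}=x^{3}+Dx$. This is a rational isomorphism, and it again shows the twist by $-1$ is trivial for curves with $j=1728$ of this shape.

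There is no real obstacle. The only point to check is that the twist convention in Proposition~\ref{prop:2.2} (coefficients scaled by $m,m^{2},m^{3}$) gives coefficient $m^{2}D=D$ for the $x$-term. Since $m=-1$ is squarefree and $\neq 1$, the hypotheses of Proposition~\ref{prop:2.2} are satisfied.
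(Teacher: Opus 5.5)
Your proposal is correct and follows the same route as the paper. You apply Proposition~\ref{prop:2.2} with $m=-1$ and observe that $E_{D}^{(-1)}: y^{2}=x^{3}+(-1)^{2}Dx$ is literally $E_{D}$; the alternative check via $(x,y)\mapsto(-x,y)$ is a harmless extra confirmation.
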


\begin{proof}[Proof of Corollary~\ref{cor:rank-i}]
We apply Proposition~\ref{prop:2.2} with $m=-1$, $\bbK=\bbQ(i)$ and $E=E_{D}$.
Hence $E_{D}^{(-1)}: y^{2} =x^{3}+(-1)^{2}Dx= x^{3}+Dx= E_{D}$.
Using Proposition~\ref{prop:2.2}, we have
\[
\rank E_{D}(\bbK)
= \rank E_{D}(\bbQ) + \rank E_{D}^{(-1)}(\bbQ)
= 2\rank E_{D}(\bbQ).
\]
\end{proof}

\begin{proof}[Proof of Corollary~\ref{cor:rk0-Qi}]
From Theorem~\ref{thm:rk0-sha1}, we know that $\rank E_{p,q}(\bbQ)=0$.
So our claim follows directly from \eqref{eq:rank-minus1-twist} with $D=-pq$.
\end{proof}

\end{document}